\newif\ifmarkup
\newcommand{\markupadd}[1]{%
\ifmarkup
\textcolor{blue}{#1}%
\else
#1%
\fi
}
\title{Cyclic Coordinate Dual Averaging\\ with Extrapolation%  for Generalized Variational Inequalities
\thanks{Submitted to the editors 01/07/2022.
\funding{This material is based upon research supported by, or in part by, the U.\ S.\ Office of
Naval Research under award number N00014-22-1-2348. The work was also partially supported by the NSF grant  DMS-2023239 and by a UW-Madison startup grant.}}}
\author{Chaobing Song\thanks{Department of Computer Sciences, UW-Madison, Madison, WI, USA (\email{chaobing.song@wisc.edu})}
\and Jelena Diakonikolas  \thanks{Department of Computer Sciences, UW-Madison, Madison, WI, USA (\email{jelena@cs.wisc.edu})}
}
\newtheorem{lemma}{Lemma}
\newtheorem{remark}{Remark}
\newtheorem{assumption}{Assumption}
\newtheorem{example}{Example}
\def\1{\bm{1}}
\def\vzero{{\bm{0}}}
\def\va{{\bm{a}}}
\def\vb{{\bm{b}}}
\def\vc{{\bm{c}}}
\def\vp{{\bm{p}}}
\def\vq{{\bm{q}}}
\def\vu{{\bm{u}}}
\def\vx{{\bm{x}}}
\def\vy{{\bm{y}}}
\def\vz{{\bm{z}}}
\def\vxt{\tilde{\bm{x}}}
\def\vxh{\hat{\bm{x}}}
\def\mA{{\bm{A}}}
\def\mB{{\bm{B}}}
\def\mF{{\bm{F}}}
\def\mI{{\bm{I}}}
\def\mQ{{\bm{Q}}}
\def\mQh{\widehat{\bm{Q}}}
\DeclareMathAlphabet{\mathsfit}{\encodingdefault}{\sfdefault}{m}{sl}
\SetMathAlphabet{\mathsfit}{bold}{\encodingdefault}{\sfdefault}{bx}{n}
\def\gB{{\mathcal{B}}}
\def\gF{{\mathcal{F}}}
\def\gS{{\mathcal{S}}}
\def\sR{{\mathbb{R}}}
\newcommand{\E}{\mathbb{E}}
\DeclareMathOperator*{\argmin}{arg\,min}
\newcommand{\innp}[1]{\left\langle #1 \right\rangle}
\newcommand{\Gap}{\mathrm{Gap}}
\newcommand{\dom}{\mathrm{dom}}
\newcommand{\subalign}[1]{%
  \vcenter{%
    \Let@ \restore@math@cr \default@tag
    \baselineskip\fontdimen10 \scriptfont\tw@
    \advance\baselineskip\fontdimen12 \scriptfont\tw@
    \lineskip\thr@@\fontdimen8 \scriptfont\thr@@
    \lineskiplimit\lineskip
    \ialign{\hfil$\m@th\scriptstyle##$&$\m@th\scriptstyle{}##$\hfil\crcr
      #1\crcr
    }%
  }%
}
\begin{document}
\maketitle
% REQUIRED
\begin{abstract}
Cyclic block coordinate methods are a fundamental class of optimization methods widely used in practice and implemented as part of standard software packages for statistical learning. Nevertheless, their convergence is generally not well understood and so far their good practical performance has not been explained by existing convergence analyses. In this work, we introduce a new block coordinate method that applies to the general class of variational inequality (VI) problems with monotone operators. This class includes composite convex optimization problems and convex-concave min-max optimization problems as special cases and has not been addressed by the existing work. The resulting convergence bounds match the optimal convergence bounds of full gradient methods, but are provided in terms of a novel gradient Lipschitz condition w.r.t.~a Mahalanobis norm. For $m$ coordinate blocks, the resulting gradient Lipschitz constant in our bounds is never larger than a factor $\sqrt{m}$ compared to the traditional Euclidean Lipschitz constant, while it is possible for it to be much smaller. Further, for the case when the operator in the VI has finite-sum structure, we propose a variance reduced variant of our method which further decreases the per-iteration cost and has better convergence rates in certain regimes. To obtain these results, we use a gradient extrapolation strategy that allows us to view a cyclic collection of block coordinate-wise gradients as one implicit gradient.
\end{abstract}

% REQUIRED
\begin{keywords}
 cyclic coordinate descent, extrapolation, variational inequality
\end{keywords}

% REQUIRED
\begin{AMS}
  68Q25, 68R10, 68U05
\end{AMS}

\section{Introduction}
%
%
%\markupdelete{Large-scale optimization problems are widespread in machine learning, signal processing, and operations research. 
%The ever-increasing scale of these problems renders standard first-order methods that rely on full gradient information impractical for many settings of interest. Fortunately, most of the standard  problems possess useful structure that makes them amenable to efficient optimization methods that only access partial gradient information at a time. A specific instance is the class of  (block)} 
{Block} coordinate methods, which rely on accessing only a subset of coordinates of the objective function (sub)gradient at a time{, are a fundamental class of methods frequently used in large-scale optimization settings}~\cite{wright2015coordinate,nesterov2012efficiency}. These methods have been very popular over the past decade, finding applications in areas such as feature selection in high-dimensional computational statistics  \cite{wu2008coordinate,friedman2010regularization, mazumder2011sparsenet}, empirical risk minimization in machine learning \cite{nesterov2012efficiency,zhang2015stochastic, lin2015accelerated,allen2016even,alacaoglu2017smooth,gurbuzbalaban2017cyclic,diakonikolas2018alternating}, and distributed computing \cite{liu2014asynchronous, fercoq2015accelerated,richtarik2016parallel}. 

{Block} 
{c}oordinate methods are classified according to the order in which (blocks of) coordinates are selected and updated~\cite{shi2016primer}, generally falling into  the three main categories: (i) greedy, or Gauss-Southwell, methods, which greedily select coordinates that lead to the largest progress (e.g., coordinates with the largest magnitude of the gradient, which maximizes progress in function value for descent-type methods), (ii) randomized methods, which select (blocks of) coordinates according to some probability distribution over the coordinate blocks, and (iii) cyclic methods, which update (blocks of) coordinates in a cyclic order. Although greedy methods can be quite effective, they are generally limited by the greedy selection criterion, which (except in some very specialized settings; see, e.g.,~\cite{nutini2015coordinate}) requires reading full first-order information, in each iteration. Thus, more attention has been given to randomized and cyclic methods.

From the aspect of theoretical guarantees, a major advantage of randomized coordinate methods (RCM) over cyclic variants has been the simplicity with which convergence arguments can be carried out. 
By sampling coordinates randomly with replacement, the expectation of a coordinate gradient is the full gradient, thus the analysis can be largely reduced to that of standard gradient descent. As a result, many variants of RCM with provable guarantees have been proposed for both convex minimization problems \cite{nesterov2012efficiency,lin2015accelerated,fercoq2015accelerated,diakonikolas2018alternating,allen2016even,hanzely2019accelerated,nesterov2017efficiency} and convex-concave min-max problems \cite{dang2014randomized, zhang2015stochastic, alacaoglu2017smooth,chambolle2018stochastic,tan2018stochastic,carmon2019variance,latafat2019new, fercoq2019coordinate, alacaoglu2020random,song2021variance}. 
The complexity of RCM as measured by the number of times full gradient information is accessed is no worse {(and often much better)} than that for full gradient first-order methods, making RCM suitable for high-dimensional settings. 
However, these guarantees are attained only in expectation or with high probability. %\markupdelete{Meanwhile, to sample the coordinates, randomized methods must involve generation of pseudorandom numbers from a certain probability distribution, which makes the implementation complicated and may dominate the cost if the coordinate update is cheap.} 
Furthermore, in practical tasks such as the training of deep neural networks, the strategy of sampling with replacement is seldom used due to reduced performance caused by not iterating over all the coordinates with high probability in one pass (while sampling without replacement achieves this with probability one) \cite{bottou2009curiously}. 

Compared to sampling with replacement, cyclically choosing coordinates or sampling without replacement (i.e., cyclically choosing coordinate blocks with their order determined according to a random permutation) appears more natural. In fact, cyclic coordinate methods (CCMs) often have better empirical performance than RCM \cite{beck2013convergence,chow2017cyclic,sun2019worst}. Due to their simplicity and empirical efficiency, CCMs have been the default approach in many well-known software packages for high-dimensional computational statistics such as  GLMNet \cite{friedman2010regularization} and SparseNet \cite{mazumder2011sparsenet}.  

However, CCM is much harder to analyze than RCM because it is highly nontrivial to establish a connection between the (cyclically selected) coordinate gradient and full gradient. As a result, compared to RCM, there are hardly any theoretical guarantees for CCM. %\markupdelete{In the seminal paper about RCM \cite{nesterov2012efficiency}, Nesterov has remarked that it is ``almost impossible to estimate the rate of convergence'' of cyclic coordinate descent in the general problem case.} 
However, some guarantees have been provided in the literature, albeit often under very restrictive assumptions such the isotonicity of the gradient~\cite{saha2013nonasymptotic} or with convergence rates that do not justify better empirical performance of CCM over RCM~\cite{beck2013convergence}. In particular, the iteration complexity result from~\cite{beck2013convergence} for a standard, gradient descent-type CCM applied to smooth convex optimization has \emph{linear dependence on the ambient dimension} (or the number of blocks in the block coordinate setting).  This linear dependence is expected, as the argument from~\cite{beck2013convergence} relies on treating the cyclical coordinate gradient as an approximation of full gradient of the current iterate. Further, such a dependence is unavoidable in the worst case~\cite{sun2019worst}, and much of the follow-up work to~\cite{beck2013convergence} has focused on either improving the dependence on other problem parameters (such as the Lipschitz constants) or on addressing structured classes of quadratic optimization problems~\cite{sun2015improved,li2017faster,hong2017iteration,wright2020analyzing,lee2019random,gurbuzbalaban2017cyclic}. 

%%% Commenting out below; not needed for this version
%The linear dependence on the number of coordinate blocks in the iteration complexity of CCM can be improved to square-root dependence by employing Nesterov acceleration as in~\cite{beck2013convergence}, which improves the convergence rate from $1/k$ to $1/k^2.$ 
% \cb{We may consdier some way to say in the acceleration case, our improvement is $m^{1/4}.$} 
%We are not aware of any existing method of the CCM-type that attains provably better dependence on the number of blocks.   %\cb{some intro for accelerated version of CCM.  There also exists some attempt to get an accelerated version of CCM, which maintains the same dependence on $\epsilon$ with accelerated gradient descent, but still has linear dependence on the number of blocks and involves an extra step of computing full gradient per-iteration.    }

Beyond the setting of smooth convex optimization, \cite{chow2017cyclic} has provided convergence results for a variant of CCM applied to unconstrained monotone variational inequality problems (VIPs), where the operator $\mF: \sR^d \to \sR^d$ is assumed to be cocoercive. Cocoercivity is a very strong assumption, which leads to an equivalence between solving the original VIP (equivalently, finding a zero of $\mF$, which is also known as the monotone inclusion problem) and finding a fixed point of a nonexpansive (1-Lipschitz) operator (see, e.g.,~\cite[Chapter~12]{facchinei2007finite}). This condition already fails to hold for bilinear matrix games, which is one of the most basic setups of min-max optimization.   
%Moreover, the convergence rate of {$1/k^{1/4}$ for reducing $\|\mF(\vx)\|$} from~\cite{chow2017cyclic} is unsatisfying, as we expect faster convergence for this class of methods \cb{not very good.}.

Finally, variance reduction strategies have been widely used to reduce the per-iteration cost of optimization methods in large-scale finite-sum settings. {Apart from \cite{hamedani2020stochastic} which provided a variance reduced scheme for the special subclass of alternating minimization methods (i.e., cyclic methods with two blocks), to the best of our knowledge, prior to our work there existed no variance reduced schemes with provable complexity guarantees for cyclic methods with an arbitrary number of blocks.} %\markupdelete{For methods with cyclic schemes, the existing literature has almost exclusively focused on variance reduction strategies for shuffled SGD~\cite{park2020linear,gurbuzbalaban2017convergence,mokhtari2018surpassing,malinovskybetter,shamir2016without,ying2020variance}, which updates all the coordinates in each iteration but accesses each sample function in a cyclic way. In the context of cyclically updating coordinates, to the best of our knowledge, there exists only one very recent variance reduced primal-dual method \cite{hamedani2020stochastic} which considers two-block cyclic coordinate update (i.e., first updating \emph{all} the primal coordinates, then updating \emph{all} the dual ones). However, there had been no work on variance reduction for general cyclic coordinate updates.} % had remained open. 

In summary, prior to this work, the following questions had remained open:
\begin{enumerate}
    \item Is it possible to develop a {CCM} method  that has a better dependence on the number of blocks{, even in the special case of smooth convex optimization}?
    \item Is it possible to obtain convergence guarantees for a CCM applied to the general class of variational inequality problems?
    \item Is it possible to further reduce per-iteration cost and improve  {overall} complexity results using variance reduction?
\end{enumerate}

%As a result, the following problems have remained open: (i) It is not known whether the linear dimension dependence of CCM can be improved even for smooth convex optimization problems; and (ii) It is not known whether CCM can have convergence guarantees for general monotone VIPs;\cb{(iii) acceleration}. As monotone VIPs include convex minimization problems as a special case, in this paper, we address the two questions by studying a new CCM-type method for monotone VIPs with strong convergence guarantees. \cb{Here need to change as A-CODER is added. }

%%%%%%%%%%%%%%%%%%%%%%%
\subsection{Our Contributions}
We consider generalized Minty variational inequality (GMVI) problems, which ask for finding $\vx^*$ such that 
\begin{equation}\label{eq:main-problem}\tag{P} 
\innp{\mF(\vx), \vx - \vx^*} + g(\vx) - g(\vx^*) \geq 0, \; \forall \vx \in \sR^d, 
\end{equation}
where $\mF:\sR^d \to \sR^d$ is a monotone Lipschitz operator and $g: \sR^d \to \sR\cup \{+\infty\}$ is a proper, extended-valued, convex, lower semicontinuous, block-separable function with an efficiently computable proximal operator (see Section~\ref{sec:prelims} for precise definitions). Our problem of interest~\eqref{eq:main-problem} captures broad classes of optimization problems, such as convex-concave min-max optimization\vspace{-1.5mm}
\begin{equation}\label{eq:problem-min-max}\tag{P\textsubscript{MM}}
    \min_{\vx^1 \in \sR^{d^1}}\max_{\vx^2 \in \sR^{d^2}} \Phi(\vx^1, \vx^2),
    \vspace{-1.5mm}
\end{equation}
where $\Phi(\vx^1, \vx^2):= \phi(\vx^1, \vx^2) + g^1(\vx^1) - g^2(\vx^2),$ $d^1 + d^2 = d,$ $\phi$ is convex-concave and smooth, and $g^1, g^2$ are convex and ``simple'' (i.e., have efficiently computable proximal operators), and convex composite optimization
\begin{equation}\label{eq:problem-comp-opt} \tag{P\textsubscript{CO}}
    \min_{\vx \in \sR^d} \big\{f(\vx) + g(\vx)\big\},
\end{equation}
where $f$ is smooth and convex, and $g$ is convex and ``simple''. 
\vspace{-1mm}
To reduce \eqref{eq:problem-min-max} to \eqref{eq:main-problem}, it suffices to stack vectors $\vx^1, \vx^2$ and define $\vx = (\vx^1, \vx^2),$ $\mF(\vx) = \big[\subalign{\nabla_{\vx^1} \phi(\vx^1, \vx^2)\\ - \nabla_{\vx^2} \phi(\vx^1, \vx^2)}\big],$ $g(\vx) = g^1(\vx^1) - g^2(\vx^2).$ To reduce \eqref{eq:problem-comp-opt} to \eqref{eq:main-problem}, it suffices to take $F(\vx) = \nabla f(\vx)$, while $g$ is the same for both problems. See, e.g.,~\cite{nemirovski2004prox,malitsky2019golden} and Corollaries~\ref{cor:comp-opt} and~\ref{cor:min-max} for more information. 

As is standard, we also assume that the operator $\mF$ admits a coordinate-friendly structure so that a full pass of cyclically computing (and updating) coordinate gradients has the same order of cost as computing the full gradient at a point.  
Our goal is to find an $\epsilon$-accurate solution to~\eqref{eq:main-problem} defined as $\vx^*_{\epsilon}$ that satisfies
\begin{equation}\label{eq:main-problem-eps}\tag{P\textsubscript{approx}}%$_\epsilon$} 
\Gap(\vx^*_{\epsilon};\vu) :=  \innp{\mF(\vx),  \vx^*_{\epsilon}-\vx} + g(\vx^*_{\epsilon}) - g(\vx) \le \epsilon, \; \forall \vx \in \gB.
\end{equation}
When the domain of $g$ is compact, we take $\gB = \dom(g).$ When the domain of $g$ is not compact, it is generally not possible to satisfy the inequality from \eqref{eq:main-problem-eps} for a finite non-negative $\epsilon,$ unless $\vx^*_{\epsilon}$ is an optimal solution. To see this, consider, for example, the case when $d$ is even, $g \equiv 0$ and $\mF = \begin{bsmallmatrix}&\vzero &\mI\\ &-\mI &\vzero\end{bsmallmatrix}$. Clearly, $\mF$ is monotone and \eqref{eq:main-problem} has a (unique) solution at $\vx^* = \vzero.$ However, for any $\vx^*_\epsilon \neq \vx^*$ it holds $\inf_{\vx \in \sR^d}\innp{\mF(\vx), \vx - \vx^*_\epsilon} = -\infty,$ and, thus, \eqref{eq:main-problem-eps} cannot be satisfied for any finite $\epsilon$. To deal with this issue, as is standard (see, e.g.,~\cite{chambolle2011first}), when the domain of $g$ is non-compact we will require that $\gB$ is compact (typically a ball of constant radius centered at an optimal solution). 
% \cb{This is not good. When the domain of $g$ is not compact, it is not possible to satisfy the above inequality for any $\vx. $ }

In the finite sum setting where we employ variance reduction, we use a slightly weaker notion of $\epsilon$-approximation, requiring
\begin{equation}\label{eq:main-problem-eps-vr}\tag{P\rlap{\textsuperscript{VR}}\textsubscript{approx}}%$_\epsilon$} 
\widehat{\Gap}(\vx^*_{\epsilon}; \vx) = \innp{\mF(\vx) + g'(\vx), \vx^*_{\epsilon} - \vx} \leq \epsilon, \; \forall \vx \in \gB,\, \forall g'(\vx) \in \partial g(\vx),
\end{equation}
where $\partial g(\vx)$ denotes the subdifferential set of $g$ at point $\vx$ and the same rules about choosing $\gB$ apply as for \eqref{eq:main-problem-eps}. 

To attain this goal for the general problem \eqref{eq:main-problem}, we propose the \emph{Cyclic cOordinate Dual avEraging with extRapolation (CODER)} method. To the best of our knowledge, this method is novel even in the setting of one\footnote{A method of mirror-descent style \cite{kotsalis2020simple}  shares a similar operator extrapolation idea.}  (i.e., in the full gradient setting) or two (i.e., in the primal-dual setting) blocks. 
Based on a novel Lipschitz condition for $\mF$ w.r.t.~a Mahalanobis norm that we introduce (see Assumption~\ref{assmpt:new-Lip} for a precise definition), in the general multi-block setting, CODER needs to access $O(\hat{L}/\epsilon)$\footnote{Here, for simplicity, we suppress the dependence on the diameter of the feasible set and/or initial distance to optimum, and instead focus on the dependence on $\hat{L}$ and $\epsilon.$ Precise bounds involving the dependence on all problem parameters are provided in Theorems~\ref{thm:main-coder} and \ref{thm:main-coder-vr}.} equivalent full gradients to construct an $\epsilon$-approximate solution, where $\hat{L}$ is the Lipschitz constant in Assumption~\ref{assmpt:new-Lip}. Moreover, if $g(\vx)$ is assumed to be $\gamma$-strongly convex ($\gamma>0$), the oracle complexity of CODER becomes $O\big(\frac{\hat{L}}{\gamma}\log \frac{1}{\epsilon}\big)$. Both complexity results are dimension independent under the Lipschitz condition we define. In terms of the connection with the more traditional Lipschitz constant $L$ of $\mF$ (see Assumption~\ref{ass:Lip}), we show that in general $\hat{L}\le \sqrt{m} L$, where $m$ is the number of coordinate blocks. {Thus, even for the special case of smooth convex optimization and using the worst-case bound for $\hat{L}$, this constitutes a $\sqrt{m}$ (or $\sqrt{d}$ for coordinate methods) improvement over the state of the art for cyclic methods and is the first improvement in terms of the dependence on $m$ in nearly ten years.} {Moreover}, the Lipschitz constant resulting from our analysis is often lower than the Euclidean Lipschitz constant (see Section~\ref{sec:prelims} for further discussion). 

Besides the improved complexity results stated above, to the best of our knowledge, our work is the first to provide any type of convergence guarantees for CCM methods applied to GMVI. Meanwhile, we provide a consistent analysis for the unconstrained/constrained/proximal settings\footnote{These settings correspond to the settings in which $g(\vx)$ is identical to zero, is the indicator function of a ``simple'' convex constrained set, or is  a ``simple'' convex function,  respectively, where ``simple'' means  that the function has efficiently computable projection/proximal operators). }, which is nontrivial for CCM methods \cite{beck2013convergence,chow2017cyclic}. 
Finally, our method applies to arbitrary block separation, which is highly nontrivial in the min-max setting, where vanilla CCM and RCM diverge in general (see Remark \ref{rem:non-convergence-of-vanilla-methods}).

To prove our main results, instead of treating coordinate gradient as an approximation of the full gradient, we consider a novel approximation strategy that relates the \emph{collection of cyclic coordinate gradients} from one full pass over the coordinates to a certain full \emph{implicit gradient}. 
This collection perspective helps us  {improve} the linear dependence on the dimension (or number of coordinate blocks).  %
To make our results applicable to GMVI problems, we introduce an extrapolation step on the operator, which is inspired by the very recent paper \cite{hamedani2018primal} that considered  non-bilinear convex-concave min-max problems, in the full gradient setting. %Such a strategy was also adopted by \cite{kotsalis2020simple}. 

Additionally, in the finite sum setting where $\mF$ can be expressed as $\mF(\cdot) = \frac{1}{n}\sum_{t=1}^n \mF_t(\cdot)$, we provide a variance reduced variant of CODER, VR-CODER, which reduces the per-iteration cost in the large data regimes and attains an overall improved complexity bound. To obtain this result, we combine the operator extrapolation with a double-loop variance reduction strategy that is most closely related to SVRG~\cite{johnson2013accelerating}; however, there are also important differences. The simple adding of the SVRG-style operator estimate to the extrapolated operator turns out to be insufficient to cancel out all the error terms in the analysis. For this reason, our stochastic extrapolated operator estimate also employs point extrapolation (see Step~\ref{step:vr-coder-extrapolation} in Algorithm~\ref{alg:coder-vr} and the corresponding discussion in Section~\ref{sec:vr-coder}). 

% \cb{The connection with VR-PDHG, extragradient, etc. }
VR-CODER can be viewed as a natural extension of the very recent results for  (one-block) variance reduced extragradient method \cite{alacaoglu2022stochastic}  and (two-block) variance reduced primal-dual hybrid gradient method \cite{hamedani2020stochastic} to multi-block. To attain this,  we conduct our convergence analysis based on both our novel definition of Lipschitz constants and the classical Euclidean Lipschitz constant.

% \vspace{-2mm}
%%%%%%%%%%%%%%%%%%%
\subsection{Related Work}
% \vspace{-2mm}
As discussed earlier, despite significant research activity devoted to randomized coordinate methods~\cite{nesterov2012efficiency,lin2015accelerated,fercoq2015accelerated,diakonikolas2018alternating,allen2016even,hanzely2019accelerated,nesterov2017efficiency,zhang2015stochastic,alacaoglu2017smooth,tan2018stochastic,song2021variance}, far less attention has been given to cyclic coordinate variants, and specifically to their rigorous convergence guarantees. 
In particular, while convergence guarantees have been established for smooth convex optimization problems in~\cite{beck2013convergence}, the obtained bounds exhibit at least linear dependence on the number of blocks (equal to the dimension in the coordinate case). Further, the bound from~\cite{beck2013convergence} also scales with $L_{\max}/L_{\min}$, where $L_{\max}$ and $L_{\min}$ are the maximum and the minimum Lipschitz constants over the blocks, which is unsatisfying, as (block) coordinate methods often exhibit improvements over full gradient methods when the Lipschitz constants over blocks are highly non-uniform. 
% \cb{This sentence may be not very strict. There exists some cases that $L_{\max}=L_{\min}\ll L.$}

In general, vanilla CCM is known to be order-$d^2$ slower than RCM in the worst case~\cite{sun2019worst}, where $d$ is the dimension, which is in conflict with its comparable and often superior performance in practice, as compared to RCM with the same step size strategy. 
% \cb{This sentence is only correct when we consider the same (adaptive) step size strategy for CCM and RCM. Meanwhile, the comparison is under the convex minimization setting.}.
This has led to more refined analyses of CCM with softer guarantees that explain why the worst-case examples are uncommon~\cite{gurbuzbalaban2017cyclic,lee2019random,wright2020analyzing}. However, the existing results only apply to unconstrained convex quadratic problems.

%% REMOVING THE PARAGRAPH BELOW SINCE IT IS NOT RELATED WORK.

%By contrast to existing work, we introduce a novel extrapolation-based CCM that applies to a broad class of generalized variational inequality problems, which contains (composite) convex  optimization as a special case. In the case of composite convex
%In the case of convex quadratic functions and unlike RCM or existing CCM methods, the results we obtain never exhibit worse complexity than the full gradient methods, and are often of much lower complexity.
%{Further, our method provably converges on min-max problems on which standard CCM and RCM methods diverge in general (see Remark~\ref{rem:non-convergence-of-vanilla-methods}).  }

% \vspace{-2mm}
%%%%%%%%%%%%%%%%%%%%%%%%%%%%% PRELIMS
%%
\section{Notation and New Lipschitz Conditions}\label{sec:prelims}
%%
% \vspace{-2mm}
We consider the $d$-dimensional Euclidean space $(\sR^d, \|\cdot\|),$ where $\|\cdot\| = \sqrt{\innp{\cdot, \cdot}}$ denotes the Euclidean norm, $\innp{\cdot, \cdot}$ denotes the (standard) inner product, and $d$ is assumed to be finite. Given a matrix $\mB,$ the operator norm of $\mB$ is defined in a standard way as $\|\mB\| = \sup\{\|\mB\vx\|: \vx\in\sR^d,\, \|\vx\|\le 1\}.$ We use $\vzero$ to denote an all-zeros vector with dimension determined by the context. Given a positive integer $m,$ let $[m]$ denote the set $\{1,2,\ldots, m\}.$
Throughout the paper, we assume that there is a given partition of the set $\{1, 2, \dots, d\}$ into sets $\gS^j$, $j \in \{1, \dots, m\},$ where $|\gS^j| = d^j > 0.$ For notational convenience, we assume that sets $\gS^j$ are comprised of consecutive elements from $\{1, 2, \dots, d\}$, that is,  $\gS^1 = \{1, 2, \dots, d^1\},$ $\gS^2 = \{d^1 + 1, d^1 + 2, \dots, d^1 + d^2\},\dots, \gS^m = \{\sum_{j=1}^{m-1}d^j + 1, \sum_{j=1}^{m-1}d^j + 2, \dots, \sum_{j=1}^{m}d^j\}$. This assumption is without loss of generality, as all our results are invariant to permutations of the coordinates. Given subvectors $\vx^j\in\sR^{d_j} (j\in[m])$, we use $(\vx^1, \vx^2,\ldots, \vx^m)$ to denote the long vector concatenating $\vx^j (j\in[m])$ orderly. For an operator $\mF: \sR^d \to \sR^d$, we use $\mF^j$ to denote its coordinate components indexed by $\gS^j.$ Given a sequence of positive semidefinite matrices $\{\mQ^j\}_{j=1}^m$, we define $\mQh^j$ by
\begin{equation}
  (\mQh^j)_{i, k} = 
\begin{cases}
        (\mQ^j)_{i, k}, & \text{ if } \min\{i, k\} > \sum_{\ell=1}^{j-1} d^{\ell},\\
        0, & \text{ otherwise. }
\end{cases}   \label{eq:Q-hat}
 \end{equation}
That is, $\mQh^j$ corresponds to the matrix $\mQ^j$ with the first $j - 1$ blocks of rows and columns set to zero. 

Given a proper, convex, lower semicontinuous function $g: \sR^d \to \sR \cup \{+\infty\},$ we use $\partial g(\vx)$ to denote the subdifferential set (the set of all subgradients) of $g$. Of particular interests to us are functions $g$ whose proximal operator (or resolvent), defined by
\begin{equation}\label{eq:prox-op}
    \mathrm{prox}_{\tau g}(\vu) := \argmin_{\vx \in \sR^d}\Big\{\tau g(\vx) + \frac{1}{2 }\|\vx - \vu\|^2\Big\}
\end{equation}
is efficiently computable for all $\tau > 0$ and $\vu \in \sR^d.$

To unify the cases in which $g$ are convex and strongly convex respectively, we say that $g$ is $\gamma$-strongly convex for $\gamma \geq 0,$ if for all $\vx, \vy \in \sR^d$ and $g'(\vx)\in \partial g(\vx)$, 
\begin{equation*}
    g(\vy) \geq g(\vx) +  \innp{g'(\vx), \vy - \vx} + \frac{\gamma}{2}\|\vy - \vx\|^2. 
\end{equation*}

\paragraph{Standard assumptions} Before introducing our new Lipschitz condition for $\mF,$ we provide the following standard assumptions first. 
\begin{assumption} \label{assmpt:general}
There exists at least one $\vx^*$ that solves~\eqref{eq:main-problem}. 
\end{assumption}

% \cb{I think that using M for the classical Lip constant is not widely accepted. Meanwhile, we need introduce both $\mQh$ and $\mQt$. So, we can use $L$ for the classical one, and use $\hat{L}$ and $\tilde{L}$ for the new ones.  }
\begin{assumption}\label{ass:mono}
$\mF: \sR^d \to \sR^d$ is monotone: $\forall \vx, \vy, \innp{\mF(\vx) - \mF(\vy), \vx - \vy} \geq 0.$
\end{assumption}

\begin{assumption}\label{ass:Lip}
$\mF: \sR^d \to \sR^d$ is $L$-Lipschitz: $\forall \vx, \vy,$ $ \|\mF(\vx) - \mF(\vy)\| \leq L\|\vx - \vy\|. $
\end{assumption}

\begin{assumption}\label{assmpt:strongly-convex}
$g(\vx)$ is $\gamma$-strongly convex $(\gamma\ge 0)$, block-separable over $\{\gS^j\}_{j=1}^m:$ $g(\vx) = \sum_{j=1}^m g^j(\vx^j)$, and admits an efficiently computable proximal operator.   
\end{assumption}

\markupadd{We note here that the assumption that $g(\vx)$ admits an efficiently computable proximal operator (in Assumption~\ref{assmpt:strongly-convex}) is made to ensure that the iterations of our algorithms (which make calls to a proximal operator for $g$) are not computationally expensive. However, the number of iterations of our algorithms (or the number of calls to the proximal operator for $g$) are bounded irrespective of this assumption.}

\paragraph{New Lipschitz condition} To obtain improved complexity results for CCMs, we introduce a novel Lipschitz condition w.r.t.~a Mahalanobis norm. 

\begin{assumption}\label{assmpt:new-Lip} 
There exists a sequence of positive semidefinite matrices $\{\mQ^j\}_{j=1}^m$ such that each $\mF^j(\cdot)$ is $1$-Lipschitz continuous w.r.t.~the norm $\|\cdot\|_{\mQ^j},$ i.e., $\forall \vx, \vy \in \sR^d,$
\begin{equation}\label{eq:block-Lipschitz}
\|\mF^{j}(\vx) - \mF^{j}(\vy)\| \le \sqrt{(\vx-\vy)^T\mQ^j(\vx-\vy)} = \|\vx - \vy\|_{\mQ^j},
\end{equation}
where $\mF^j(\vx)$ is the $d^j$-dimensional subvector comprised of the $\gS^j$ coordinates of $\mF(\vx).$ 
Further, $\sqrt{\Big\|\sum_{j=1}^m {\mQh}^j \Big\|}  = \hat{L} < \infty,$ where $\mQh^j$ is defined in Eq.~\eqref{eq:Q-hat}.  
%\end{equation}

% Meanwhile, $\sqrt{\Big\|\sum_{i=1}^m {\mQt}^i \Big\|}  = \tilde{L}^2 < \infty,$ where $\mQt^i$ is defined by 
% $$
%     (\mQt^i)_{j, k} = 
%     \begin{cases}
%             (\mQ^j)_{j, k}, & \text{ if } \max\{j, k\} \le \sum_{\ell=1}^{i-1} d^{\ell},\\
%             0, & \text{ otherwise. }
%     \end{cases}
% $$
% That is, $\mQt^i$ corresponds to the matrix $\mQ^j$ with  the last $m-i+1$ blocks of rows and columns set to 0. 
% \cb{Give the definition of $\tilde{\mQ}$ here. A-CODER will use $\tilde{\mQ}$.  }
\end{assumption}
Note that if $\mF$ satisfies Assumption \ref{ass:Lip}, then Assumption \ref{assmpt:new-Lip}  can be trivially satisfied with $\mQ^j = L^2 \mI\; (j\in[m]),$ where $\mI$ is the identity matrix, as $\|\mF^j(\vx) - \mF^j(\vy)\|^2 \leq \|\mF(\vx) - \mF(\vy)\|^2 \leq L^2 \|\vx - \vy\|^2.$ However, choosing more general matrices $\mQ^j$ allows for more flexibility in adapting to the problem geometry.  In the following, let $\mA = [\va^1, \va^2, \ldots, \va^d] \in\sR^{n\times d}$ be a data matrix and $\vb\in\sR^n$ be the vector of corresponding labels. We now provide some concrete example applications for the setting of $\mQ^j.$
\begin{example}[Elastic net]\label{ex:elastic-net}
The elastic net problem $\min_{\vx \in \sR^d}\frac{1}{2}\|\mA\vx - \vb\|^2 + \lambda_1 \|\vx\|_1 + \frac{\lambda_2}{2}\|\vx\|^2$ ($\lambda_1\ge 0, \lambda_2 \ge 0$) is an example  of~\eqref{eq:problem-comp-opt} and a special case of~\eqref{eq:main-problem}, where $\mF(\vx) = \mA^T(\mA\vx - \vb)$, $g(\vx) = \lambda_1 \|\vx\|_1 + \frac{\lambda_2}{2}\|\vx\|^2$. Observe that when $\lambda_1 = 0,$ $\lambda_2 > 0,$ elastic net reduces to ridge regression, while when $\lambda_1 >0,$ $\lambda_2 = 0,$ the problem reduces to LASSO. For this setup, we have $\|\mF(\vx) - \mF(\vy)\| = \|\mA^T\mA(\vx-\vy)\| = \sqrt{(\vx-\vy)^T (\mA^T\mA)^2(\vx-\vy)}.$ The tightest Lipschitz constant of $\mF(\vx)$ that we can select is $L = \|\mA^T\mA\|.$ Meanwhile,  letting $m=d$, $d^1 = d^2 = \cdots = d^m =1,$ we have $\|\mF^j(\vx) - \mF^j(\hat{\vx})\| = \|(\va^j)^T\mA(\vx-\hat{\vx})\| = \sqrt{(\vx-\hat{\vx})^T {\mQ}^j (\vx-\hat{\vx})}$ with  ${\mQ}^j = \mA^T \va^j (\va^j)^T \mA.$ 
\end{example}

%\cb{Some problem exists here. In classical version, SVM is $\frac{1}{n}\sum\{1 - b_i \va_i^T \vx, 0\}.$}
%\cb{definition for indicator function $\mathds{1}.$ }
% \vspace{-1.2mm}
\begin{example}[$\ell_1$ regularized SVM]\label{exam:svm}
The $\ell_1$-norm regularized support vector machine (SVM) is $\min_{\vx\in\sR^d}\{\max\{\mathbf{1}-\bar{\mA} \vx, \vzero\}  + \lambda \|\vx\|_1 \},$ where $\bar{\mA} = [b_1\va_1, b_2\va_2, \ldots, b_d \va_d]$, % with $\vb\in \{1, -1\}^n$ and $\circ$ denoting the element-wise Hadamard product,  
$\lambda\ge 0,$ and $\max\{\cdot,\cdot\}$ is applied in an element-wise way. 
Observing that $\max\{1-x, 0\} = \max_{-1\le y\le 0} (x - 1)y$, it follows that the SVM problem is an instance of \eqref{eq:problem-min-max} with $\mF(\vx, \vy) = (\bar{\mA}^T\vy,  -(\bar{\mA}\vx - \mathbf{1}))$ and  $g(\vx,\vy) = \lambda \|\vx\|_1 +  \sum_{j=1}^n \mathds{1}_{ -1\le y_j \le 0},$ where $\mathds{1}_{ -1\le y_j \le 0}$ is the convex indicator function of the interval $[-1, 0]$ (zero within the interval and infinite outside it). The tightest Lipschitz constant of $\mF(\vx)$ that we can select is $L = \|\bar{\mA}\|.$ Let $m = d+n, d^1 = d^2 = \cdots = d^{d+n} = 1,$ $\bar{\mA} = (\bar{\va}_1, \bar{\va}_2, \ldots, \bar{\va}_d) = (\bar{\vc}_1, \bar{\vc}_2, \ldots, \bar{\vc}_n)^T$. Then we have for $1\le j \le d$, $\|\mF^j(\vx, \vy) - \mF^j(\hat{\vx}, \hat{\vy})\| = \|\bar{\va}_j^T(\vy - \hat{\vy})\|$ with $\mQ^j =\begin{bsmallmatrix}&\vzero &\vzero \\ &\vzero &\bar{\va}_j \bar{\va}_j^T \end{bsmallmatrix}$ and for $d+1\le j \le d+n$, $\|\mF^j(\vx, \vy) - \mF^j(\hat{\vx}, \hat{\vy})\| = \|\bar{\vc}_j^T(\vx - \hat{\vx})\|$ with    $\mQ^j =\begin{bsmallmatrix} & \bar{\vc}_j\bar{\vc}_j^T  &\vzero \\ &\vzero &\vzero \end{bsmallmatrix}.$  
\end{example}  
% \cb{ It should be  $g(\vx, \vy) = \lambda\|\vx\|_1 -\sum_{j=1}^n \mathds{1}_{ -1\le y_j \le 0}$? }

% \vspace{-2.5mm}
\paragraph{Comparison of Lipschitz assumptions} 
Standard Lipschitz assumptions that are used for full gradient methods are typically stated as in Assumption \ref{ass:Lip}. %
Observe that the Lipschitz constant of the entire operator $\mF$ under our assumptions is bounded by $\sqrt{\|\sum_{j=1}^m \mQ^j\|},$ as, $\forall \vx, \vy,$
\begin{align*}
    \|\mF(\vx) - \mF(\vy)\|^2 = \sum_{j=1}^m \|\mF^j(\vx) - \mF^j(\vy)\|^2 &\leq \sum_{j=1}^m (\vx - \vy)^T \mQ^j (\vx - \vy)\\
    &\leq \Big\|\sum_{j=1}^m \mQ^j\Big\|\|\vx - \vy\|^2.
\end{align*}
In the worst case for full gradient methods, it is possible that $L = \sqrt{\big\|\sum_{j=1}^m \mQ^j\big\|}$, and this worst case  happens for many interesting examples discussed above.  The guarantees that we provide for our method CODER   are in terms of $\hat{L} = \sqrt{\big\|\sum_{j=1}^m \mQh^j\big\|}$. It is not hard to show that in general $\|\mQh^j\|\leq \|\mQ^j\|$. Thus, we have the following bound
% \begin{align*}
$\hat{L} \leq \sqrt{\big\|\sum_{j=1}^m \mQh^j\big\|} \leq \sqrt{\sum_{j=1}^m \big\|\mQh^j\big\|}
    \leq  \sqrt{\sum_{j=1}^m \big\|\mQ^j\big\|} \leq \sqrt{m}L. $
% \end{align*}
On the other hand, it is possible for $\hat{L}$ to be smaller than $L.$ %A simple example that demonstrates this is $\mQ^1 = \vu \vu^T,$ $\mQ^2 = \vv\vv^T,$ where $\vu^T = [1/t^2\; 1],$ $\vv^T = [-t\; 1/t]$ and $t\ge1$. As $\vu$ and $\vv$ are orthogonal,  $L =\sqrt{\|\mQ^1 + \mQ^2\|} =  \sqrt{\max\{\|\vu\|^2, \|\vv\|^2\}} = \sqrt{t^2 + \frac{1}{t^2}}.$ Further, $\mQh^1 = \mQ^1 = \begin{bsmallmatrix} 1/t^4 & 1/t^2\\ 1/t^2 & 1 \end{bsmallmatrix}$ and $\mQh^2 = \begin{bsmallmatrix}0 &0\\ 0 &1/t^2\end{bsmallmatrix}$. Thus,  $\hat{L} = \sqrt{\|\mQh^1 + \mQh^2\|} \leq \sqrt{\mathrm{Trace}(\mQh^1 + \mQh^2)}\leq \sqrt{1 + \frac{1}{t^2} +
%\frac{1}{t^4}}$. Now we can make $t$ arbitrarily large to get arbitrarily large $L/\hat{L}$.  
%

Using $\mQ^j = \mA^T \va^j (\va^j)^T \mA$, $1 \leq j \leq m,$ and $m=d$ in Example \ref{ex:elastic-net}, we compute the tightest constants  $L = \sqrt{\|\sum_{j=1}^m\mQ^j\|}$ and $\hat{L} = \sqrt{\|\sum_{j=1}^m\mQh^j\|}$ for the elastic-net problem on both simulated datasets and real datasets from the LibSVM library~\cite{CC01a}.   
As shown in  Table~\ref{tab:lip_consts} and Fig.~\ref{fig:lip-consts}, $\hat{L}$ is consistently lower than $L.$

% \cb{replace this figure.}
\begin{figure}
    \centering
    \hspace{\fill}
    \subfigure[$n = 200$]{\includegraphics[width=0.4\textwidth]{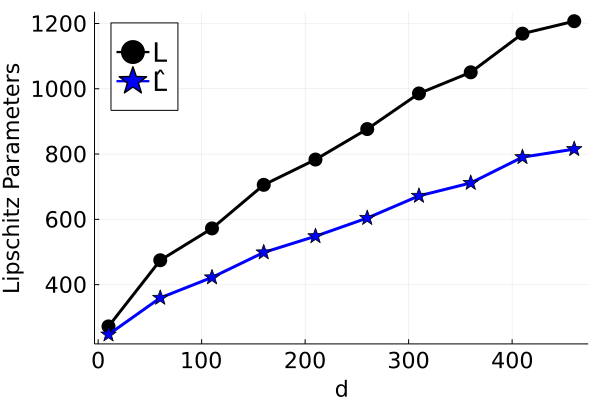}\label{fig:d}}
    \hspace{\fill}
    \subfigure[$d = 200$]{\includegraphics[width=0.4\textwidth]{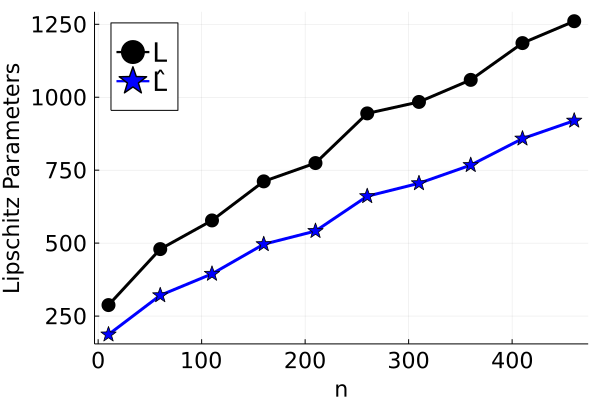}\label{fig:coor}}
    \hspace*{\fill}
    \caption{Lipschitz constants for elastic-net  problems  on  synthetic  datasets.
     All the samples are drawn i.i.d.~from standard multivariate Gaussian distribution. Fig.~\ref{fig:d} shows the values of Lipschitz parameters vs the dimension $d$ when $n$ is fixed to $200$; Fig.~\ref{fig:coor} shows the values of Lipschitz parameters vs number of samples $n$ when $d$ is fixed to 200. As we see, in both settings, the values of our  Lipschitz constant $\hat{L}$ are consistently lower than the classical Lipschitz constant $L$.
    }
    \label{fig:lip-consts}
\end{figure}
\begin{table}[th!]
\caption{Lipschitz constants for elastic-net problems on LibSVM datasets~\cite{CC01a}. To compute Lipschitz constants $L$ and $\hat{L}$, we normalize each sample vector to unit Euclidean norm.}
    \centering
    \begin{tabular}{ |c|c|c|c|c|c| } 
 \hline
 Dataset & a9a & australia & madelon & colon & mnist\\ 
 \hline
 $L$ & 15389.6   & 340.5  &   1992.4  &  9.0     &   24410.5   \\ 
  \hline
 $\hat{L}$ & 10358.8 & 238.1 & 1269.7    &   5.7    &   15236.1         \\ 
  \hline
%  $\tilde{L}$ & 9150.2 & 194.3  &  1267.1    &  5.6     &     15064.9    \\ 
%  \hline
\end{tabular}
    \label{tab:lip_consts}
\end{table}

In the literature on standard (randomized and cyclic) block coordinate methods and in the case where $\mF$ is the gradient of a convex function, the Lipschitz assumptions are typically stated as~:  
$
    \|\mF^j(\vx) - \mF^j(\vy)\| \leq L_j\|\vx - \vy\|,
$ 
where $\vx, \vy \in \sR^d$ are restricted to only differ over the $j^{\mathrm{th}}$ block of coordinates\cite{nesterov2012efficiency}. These assumptions are hard to directly compare to our Lipschitz assumptions stated in Assumption~\ref{assmpt:new-Lip}. What can be said is that in general $L_j \leq \|\mQ^j\|;$ however, note that our final convergence bound is in terms of $\big\|\sum_{j=1}^m \mQh^j\big\|,$ which is incomparable to weighted sums of $L_j$'s that typically appear in the convergence bounds for block coordinate methods. Further, note that the coordinate Lipschitz assumptions used for convex optimization are generally not compatible with min-max setups. In particular, for bilinear problems, all coordinate Lipschitz constants defined as in~\cite{nesterov2012efficiency} would be zero, which does not appear meaningful, given the non-zero complexity of bilinear problems~\cite{ouyang2018lower}. %\cb{We may have more direct explanation. }
\section{CODER Algorithm for Generalized Variational Inequalities}\label{sec:coder}
In this section, we provide the CODER algorithm (Algorithm \ref{alg:coder}) that applies to the general class of GMVI problems as stated in \eqref{eq:main-problem}, under Assumptions~\ref{assmpt:general}, \ref{ass:mono}, \ref{assmpt:strongly-convex}, and \ref{assmpt:new-Lip}.  

The general strategy can be summarized as follows. Let $\{a_k\}_{k\ge 0}$ and $\{A_k\}_{k\ge 0}$ be sequences of nonnegative numbers with $A_k =\sum_{i=1}^k a_i , a_0 = A_0 = 0.$ Let $\{\vx_k\}_{k\ge 0}$ be a sequence of points in $\dom(g)$ {generated by the algorithm and defined by \eqref{eq:x_k-def} below,} with the weighted average sequence $\{\tilde{\vx}_k\}_{k\ge 1}$ defined by $\tilde{\vx}_k = \frac{1}{A_k}\sum_{i=1}^k a_i \vx_i.$ Our goal is to show that $\Gap(\vxt_k; \vu)$ contracts at rate $1/A_k$ (with $A_k$ growing as fast as possible). Equivalently,  we need to show that $A_k\Gap(\vxt_k; \vu)$ is bounded above by a constant as $k$ increases. This strategy is inspired by~\cite{diakonikolas2019approximate,song2019unified};
 however, the concrete construction of the gap functions is novel and based on an operator extrapolation strategy that allows us to simultaneously handle an entire cyclic pass over the coordinate blocks. 

To attain this goal, we show that $A_k\Gap(\vxt_k; \vu)$ is  bounded above by a constant plus error terms. The error terms are correlated with the estimation sequence $\{\psi_k\}_{k\ge 1}$ satisfying 
% \begin{align}
% \psi_k(\vx) = - \sum_{i=1}^k a_i \big(\innp{\vq_i, \vx_i - \vx} + g(\vx_i) - g(\vu)\big) + \psi_0(\vu).   
% \end{align}
$\psi_k(\vx) = \sum_{j=1}^m \psi_k^j(\vx^j)$ with $\psi_k^j$ defined by $\psi_0^j(\vx^j) = \frac{1}{2}\|\vx^j - \vx_0^j\|^2$ and for $k \geq 1,$
\begin{equation}
    \psi_k^j(\vx^j) = \psi_{k-1}^j(\vx^j) + a_k\Big(\innp{\vq_k^j,\vx^j - \vx_k^j} + g^j(\vx^j) - g^j(\vx_k^j)\Big), \label{eq:es-psi-k-j}
\end{equation}
where in each $\psi_k^j(\vx^j)$, $\vx_k^j$ denotes the unique minimizer of  $\psi_k^j(\vx^j)$, i.e., 
\begin{equation}\label{eq:x_k-def}
    \vx_k^j = \argmin_{\vx^j \in \sR^{d^j}}\psi_k^j(\vx^j)
    % =  \mathrm{prox}_{A_k g^j}(\vx_0^j - \vz_k^j),
\end{equation}
and $\vq_k = (\vq_k^1, \vq_k^2,\ldots, \vq_k^m)$ is the extrapolated operator.
% where $\vz_k^j$ is recursively defined in Algorithm \ref{sec:coder}. \cb{check this. }
%
We remark that in the minimization problem defining $\vx_k^j$, terms $-a_k\Big(\big\langle\vq_k^j,  \vx_k^j\big\rangle + g^j(\vx_k^j)\Big)$ are treated as constants, so that 
$$\vx_k^j = \argmin_{\vx^j \in \sR^{|\gS^j|}}\Big\{\psi_{k-1}^j(\vx^j) + a_k \Big(\big\langle\vq_k^j,  \vx^j\big\rangle + g^j(\vx^j)\Big)\Big\}.$$
 Thus, under the definitions stated above, we have $\psi_k^j(\vx_k^j) = \psi_{k-1}^j(\vx_k^j).$ {Observe further that due to the block-separability of $\psi_k$, which is defined as $\psi_k(\vx) = \sum_{j=1}^m \psi_k^j(\vx^j)$, \eqref{eq:x_k-def} immediately implies that $\vx_k = \argmin_{\vx \in \sR^d}\psi_k(\vx).$} 
Meanwhile, due to the definition of $\psi_0$ and $\gamma$-strong convexity of $g(\vx)$, $\psi_k(\vx)$ is $(1+ A_{k-1}\gamma)$-strongly convex. 
Based on the above notation, we have the following bound. 
\begin{lemma}\label{lemma:gap-fn-construction}
{Let Assumptions~\ref{ass:mono} and \ref{assmpt:strongly-convex} hold and let $\{\vx_k\}_{k \geq 1}$ be a sequence of vectors obeying \eqref{eq:x_k-def}.} For any $\vu \in \dom(g)$ and any sequence of vectors $\{\vq_i\}_{1\leq i \leq k}$ in $ \sR^d,$  %we have
$$ A_k\Gap(\vxt_k; \vu) \le \sum_{i=1}^k E_i(\vu)  - \frac{1 + A_k \gamma}{2}\|\vu - \vx_k\|^2+ \frac{1}{2}\|\vu - \vx_0\|^2,$$ where the error sequence $\{E_i(\vu)\}_{1 \leq i \leq k}$ is defined by
\begin{align}
E_i(\vu) :=a_i  \innp{\mF(\vx_i) - \vq_i, \vx_i - \vu} - \frac{1+A_{i-1}\gamma}{2}\|\vx_i - \vx_{i-1}\|^2.\label{eq:coder-E_k-def}
\end{align}
% \begin{equation}\label{eq:eps_k-def}
% \begin{aligned}
%     \varepsilon_k(\vu) := \;&-\frac{1}{A_k}\psi_k(\vx_k)  + \frac{1}{A_k}\sum_{i=1}^k a_i \innp{\mF(\vx_i) - \vq_i, \vx_i - \vu}\\
%     &+ \frac{1}{A_k}\psi_0(\vu) -  \frac{1 + A_k \gamma}{2A_k}\|\vu - \vx_k\|^2,
% \end{aligned}
% \end{equation}
%

\end{lemma}
\begin{proof}
By the monotone property of $\mF$, we have $$\langle \mF(\vx), \vxt_k - \vx\rangle = \frac{1}{A_k}\sum_{i=1}^ka_i\langle \mF(\vx), \vx_i - \vx\rangle \le \frac{1}{A_k}\sum_{i=1}^ka_i \langle \mF(\vx_i), \vx_i - \vx\rangle.$$ Meanwhile, as $g$ is a convex function, by Jensen's inequality, $g(\vxt_k) \leq \frac{1}{A_k}\sum_{i=1}^k a_i g(\vx_i).$ So combining 
with the definition of $\Gap(\vxt_k; \vu)$ in \eqref{eq:main-problem-eps}, we have  
\begin{align}
  A_k    \Gap &(\vxt_k; \vu)
    \nonumber \\
   \leq \; & \sum_{i=1}^k a_i \big(\innp{\mF(\vx_i), \vx_i - \vu} + g(\vx_i) - g(\vu)\big) \nonumber\\
    =\; & \sum_{i=1}^k a_i \big(\innp{\vq_i, \vx_i - \vu} + g(\vx_i) - g(\vu)\big) -  \psi_0(\vu) \nonumber\\
    &+ \psi_0(\vu) +  \sum_{i=1}^k a_i \innp{\mF(\vx_i) - \vq_i, \vx_i - \vu} \nonumber\\
    =\; & -  \psi_k(\vu) +  \psi_0(\vu) + \sum_{i=1}^k a_i \innp{\mF(\vx_i) - \vq_i, \vx_i - \vu} \nonumber\\
     \le\; &  - \Big(\psi_k(\vx_k) + \frac{1 + A_k\gamma}{2}\|\vu - \vx_k\|^2\Big)  +\psi_0(\vu)
      +   \sum_{i=1}^k a_i \innp{\mF(\vx_i) - \vq_i, \vx_i - \vu},
     \label{eq:gap-1}  
    %  &\;  - \frac{1 + A_k\gamma}{2}\|\vu - \vx_k\|^2 +\psi_0(\vu) \nonumber\\ 
    %   \overset{(b)}{=}\;& \sum_{i=1}^k\Big( -(\psi_i(\vx_i) - \psi_{i-1}(\vx_{i-1})) + a_i  \innp{F(\vx_i) - \vq_i, \vx_i - \vu} \Big)  \nonumber\\
    %   &\;  - \frac{1 + A_k\gamma}{2}\|\vu - \vx_k\|^2 +\psi_0(\vu) \nonumber\\ 
    %   \overset{(c)}{\le}\;&  \sum_{i=1}^k \Big(- \frac{1+A_{i-1}\gamma}{2}\|\vx_i - \vx_{i-1}\|^2  + a_i  \innp{F(\vx_i) - \vq_i, \vx_i - \vu}\Big)   \nonumber\\
    %  &\;  - \frac{1 + A_k\gamma}{2}\|\vu - \vx_k\|^2 +\psi_0(\vu) \nonumber\\ 
    %  \overset{(d)}{=}\;&\sum_{i=1}^k E_i(\vu)  - \frac{1 + A_k\gamma}{2}\|\vu - \vx_k\|^2 +\psi_0(\vu),
\end{align}
where the last inequality is by the optimality condition of $\vx_k$ and $(1 + A_k\gamma)$-strong convexity of $\psi_k$, which leads to $\psi_k(\vu) \ge \psi_k(\vx_k) + \frac{1 + A_k\gamma}{2}\|\vu - \vx_k\|^2.$ 

On the other hand, using that $\psi_0(\vx_0) = 0$ and for $i \geq 1$, $\psi_{i-1}(\vx_i) - \psi_{i-1}(\vx_{i-1}) \geq \frac{1 + A_{i-1}\gamma}{2}\|\vx_i - \vx_{i-1}\|^2$ (as discussed in the paragraph above), we also have 
\begin{align}
 \psi_k(\vx_k) =\;&    \sum_{i=1}^k (\psi_i(\vx_i) - \psi_{i-1}(\vx_{i-1})) =  \sum_{i=1}^k (\psi_{i-1}(\vx_i) - \psi_{i-1}(\vx_{i-1})) \nonumber\\
 \ge\;& \sum_{i=1}^k \frac{1+A_{i-1}\gamma}{2}\|\vx_i - \vx_{i-1}\|^2. \label{eq:gap-2}
\end{align}
Combining Eqs.~\eqref{eq:gap-1} and \eqref{eq:gap-2}, and using the definitions of $E_i(\vu)$ and $\psi_0(\vu)$ completes the proof.
% The decomposition $\psi_k = \sum_{j=1}^m \psi_k^j$ follows by the block separability of all the components that comprise $\psi_k.$ 
% It remains to note that, as each $\psi_k^j$ is $(1 + A_k \gamma)$-strongly convex and minimized at $\vx_k^j,$ we have $\psi_k^j(\vu^j) \geq \psi_k^j(\vx_k^j) + \frac{1 + A_k\gamma}{2}\|\vu^j - \vx_k^j\|^2.$
\end{proof}

In Lemma \ref{lemma:gap-fn-construction}, for a fixed $\vu,$ $\psi_0(\vu)$ is a constant; meanwhile, the sequence $\{E_i(\vu)\}_{i\geq 1}$ denotes the error terms that need to be bounded above. In particular, if we prove that $\sum_{i=1}^k E_i(\vu)  \le \frac{1 + A_k\gamma}{2}\|\vu - \vx_k\|^2$, then %we can claim that  
$\Gap(\vxt_k; \vu)$ converges at rate $1/A_k$. 

Note that Lemma~\ref{lemma:gap-fn-construction} is generic---it applies to an arbitrary algorithm that satisfies its assumptions. However, bounding the error terms requires a more concrete algorithm. In the rest of the proof, we focus on analyzing the iteration complexity of CODER (Algorithm~\ref{alg:coder}). %To make the analysis more concrete, we provide the CODER description in Algorithm~\ref{alg:coder} and from now on focus on analyzing its iteration complexity. 
The main difference between CODER and a generic cyclic gradient method as stated in e.g.,~\cite{beck2013convergence} is the gradient extrapolation step defining $\vq_k^j$ in Step 7. In particular, if we had simply set $\vq_k^j$ as the $\vp_k^j$ in Step 6 of Algorithm~\ref{alg:coder},  we would recover the CCM from~\cite{beck2013convergence}. Observe further that, using the definition of the proximal step from Eq.~\eqref{eq:prox-op}, the definition of $\vx_k^j$ from Step~9 in CODER is equivalent to its definition in Eq.~\eqref{eq:x_k-def}. 
%%%%%%%%%%
\begin{algorithm*}[t!]
\caption{Cyclic cOordinate Dual avEraging with extRapolation (CODER)}\label{alg:coder}
\begin{algorithmic}[1]
\STATE \textbf{Input:} $\vx_{-1}=\vx_0\in\mathrm{dom}(g), \gamma \geq 0, \hat{L}>0, m, \{\gS^1, \dots, \gS^m\}$
\STATE \textbf{Initialization:} $\vp_0 = \mF(\vx_0), \vz_0 = \vzero$, $a_0= A_0 = 0$
\FOR{$k = 1$ to $K$} 
\STATE $a_k =  \frac{1+\gamma A_{k-1}}{2\hat{L}}, A_k = A_{k-1} + a_k$ 
\FOR{$j = 1$ to $m$} %
\STATE $\vp_k^j =   \mF^{j}(\vx^{1}_{k}, \ldots, \vx^{j-1}_{k}, \vx^{j}_{k-1},\ldots   \vx^{m}_{k-1})$
\STATE \label{step:coder-extrapolation} $\vq^j_k = \vp_k^j + \frac{a_{k-1}}{a_k}(\mF^j(\vx_{k-1}) - \vp_{k-1}^j)$
\STATE $\vz_k^j = \vz_{k-1}^j + a_k\vq^j_k$
\STATE \label{step:coder-xkj} $\vx_{k}^j = \mathrm{prox}_{A_k g^j}(\vx_0^j - \vz_k^j)$
\ENDFOR
\ENDFOR
\STATE \textbf{return} $\tilde{\vx}_K = \frac{1}{A_K}\sum_{k=1}^{K}a_k\vx_k$, $\vx_K$
\end{algorithmic}	
\end{algorithm*}
%%%%%%%%%%

%
\begin{lemma}\label{lemma:coder-gap-change}
Let $\{E_k(\vu)\}_{k\geq 1}$ be defined as in Lemma~\ref{lemma:gap-fn-construction} and let $\{\vx_k\}_{k \geq 1}$ evolve according to Algorithm~\ref{alg:coder}. Then{, under Assumption~\ref{assmpt:new-Lip},} $\forall k \ge 1,$ %it follows that:
\begin{align*}
 E_k(\vu) \leq \; & a_k \innp{\mF(\vx_k) - \vp_k, \vx_k - \vu} - a_{k-1} \innp{\mF(\vx_{k-1}) - \vp_{k-1}, \vx_{k-1} - \vu}\\
    &+ \frac{1 + A_{k-2}\gamma}{4}\|\vx_{k-1} - \vx_{k-2}\|^2 - \frac{1 + A_{k-1}\gamma}{4}\|\vx_{k-1} - \vx_k\|^2, 
\end{align*}
\end{lemma}
\begin{proof}
From Step 7 of Algorithm~\ref{alg:coder}, we have $\vq_k = \vp_k + \frac{a_{k-1}}{a_k}(\mF(\vx_{k-1}) - \vp_{k-1}).$ Then it follows that,
\begin{align}
    a_k \langle\mF(\vx_k) - \vq_k, &\,\vx_k - \vu\rangle\notag\\
    =\; & a_k \innp{\mF(\vx_k) - \vp_k, \vx_k - \vu} - a_{k-1} \innp{\mF(\vx_{k-1}) - \vp_{k-1}, \vx_k - \vu}\notag\\
    % =\; & a_k \innp{\mF(\vx_k) - \vp_k, \vx_k - \vu} - a_{k-1} \innp{\mF(\vx_{k-1}) - \vp_{k-1}, \vx_{k-1} - \vu}\notag\\
    % &+ a_{k-1} \innp{\mF(\vx_{k-1}) - \vp_{k-1}, \vx_{k-1} - \vx_k}\notag\\
    =\; & a_k \innp{\mF(\vx_k) - \vp_k, \vx_k - \vu} - a_{k-1} \innp{\mF(\vx_{k-1}) - \vp_{k-1}, \vx_{k-1} - \vu}\notag\\
    &+ a_{k-1}\innp{\mF(\vx_{k-1}) - \vp_{k-1}, \vx_{k-1} - \vx_k}. \label{eq:coder-inn-prod-term-1}
\end{align}
To simplify the notation, for all $k\ge 1, j\in[m]$, we define 
$$
\vy_{k, j} := (\vx^{1}_{k}, \ldots, \vx^{j-1}_{k}, \vx^{j}_{k-1},\ldots, \vx^{m}_{k-1}), 
$$
so that $\vp^j_k = \mF^j(\vy_{k, j}).$
Using the definition of $\vy_{k-1, j}$ and Young's inequality, we have, for all $j$ and all $\alpha > 0,$
\begin{align}
    \innp{\mF^j(\vx_{k-1}) - \vp_{k-1}^j, \vx_{k-1}^j - \vx_k^j} =\; & \innp{\mF^j(\vx_{k-1}) - \mF^j(\vy_{k-1, j}), \vx_{k-1}^j - \vx_k^j}\notag\\
    \leq\;& \frac{\alpha}{2}\|\mF^j(\vx_{k-1}) - \mF^j(\vy_{k-1, j})\|^2 + \frac{1}{2\alpha}\|\vx_{k-1}^j - \vx_k^j\|^2. \notag
\end{align}
By the definitions of $\mQh^j$ (in Eq.~\eqref{eq:Q-hat}), $\vy_{k-1,j}$ and Assumption \ref{assmpt:new-Lip}, we have 
\begin{align}
\|\mF^j(\vx_{k-1}) - \mF^j(\vy_{k-1, j})\|^2 \leq \|\vx_{k-1} - \vy_{k-1, j}\|_{\mQ^j}^2  = \|\vx_{k-1} - \vx_{k-2}\|_{\mQh^j}^2,
\end{align}
and then
\begin{equation}\notag
    \begin{aligned}
        \innp{\mF^j(\vx_{k-1}) - \vp_{k-1}^j, \vx_{k-1}^j - \vx_k^j} 
        \le\; & \frac{\alpha}{2}\|\vx_{k-1} - \vx_{k-2}\|_{\mQh^j}^2  + \frac{1}{2\alpha}\|\vx_{k-1}^j - \vx_k^j\|^2. 
    \end{aligned}
\end{equation}
Summing over $j$ and using the definition of the Lipschitz constant $\hat{L},$ we have
\begin{align}
\innp{\mF(\vx_{k-1}) - \vp_{k-1}, \vx_{k-1} - \vx_k}  =\; &  \sum_{j=1}^m\innp{\mF^j(\vx_{k-1}) - \vp_{k-1}^j, \vx_{k-1}^j - \vx_k^j} \notag\\
    \leq \; & \frac{\alpha}{2}\|\vx_{k-1} - \vx_{k-2}\|_{\sum_{j=1}^m\mQh^j}^2 
     + \frac{1}{2\alpha}\|\vx_{k-1} - \vx_k\|^2\notag\\
     \leq\; & \frac{\hat{L}^2\alpha}{2}\|\vx_{k-1} - \vx_{k-2}\|^2 + \frac{1}{2\alpha}\|\vx_{k-1} - \vx_k\|^2. \label{eq:coderF-upper-bound}
\end{align}
Combining with Eqs.~\eqref{eq:coder-E_k-def}, \eqref{eq:coder-inn-prod-term-1} and \eqref{eq:coderF-upper-bound},   we get
\begin{align*}
    E_k(\vu) \leq \; & a_k \innp{\mF(\vx_k) - \vp_k, \vx_k - \vu} - a_{k-1} \innp{\mF(\vx_{k-1}) - \vp_{k-1}, \vx_{k-1} - \vu}\\
    &+ \frac{a_{k-1} \hat{L}^2\alpha}{2}\|\vx_{k-1} - \vx_{k-2}\|^2 + \frac{a_{k-1}/\alpha - (1 + A_{k-1}\gamma)}{2}\|\vx_{k-1} - \vx_k\|^2, 
\end{align*}
and it remains to choose $\alpha = \frac{2 a_{k-1}}{1 + A_{k-1}\gamma}$ and use $a_{k-1}= \frac{1+A_{k-2}\gamma}{2\hat{L}}${, $A_{k-2} \leq A_{k-1}$}. 
\end{proof}
We are now ready to state and prove the main result of this section.
\begin{theorem}\label{thm:main-coder}
Let $\vx_0 \in \dom(g)$ be an arbitrary initial point and let $\{\vx_k\}_{k \geq 1}$ evolve according to Algorithm~\ref{alg:coder}. Then under Assumptions \ref{assmpt:general}, \ref{ass:mono}, \ref{assmpt:strongly-convex} and \ref{assmpt:new-Lip},
  $\forall k \geq 1$ and all $\vu \in \dom(g):$ 
\begin{equation}\label{eq:thm-main-gen-coder}
    \begin{aligned}
        A_k \Gap(\vxt_k; \vu) + \frac{1+ A_k\gamma}{4}\|\vu - \vx_k\|^2 \leq \frac{1}{2}\|\vu - \vx_0\|^2.
    \end{aligned}
\end{equation}
In particular, 
\begin{equation}\notag
\Gap(\vxt_k; \vu) \le\frac{1}{2 A_k}\|\vu - \vx_0\|^2.  
\end{equation}
Further, if $\vx^*$ is any solution to Problem \eqref{eq:main-problem}, we also have
\begin{equation}\notag
    \|\vx_k - \vx^*\|^2 \leq \frac{2}{1 + A_k\gamma}\|\vx_0 - \vx^*\|^2.    
\end{equation}
In both bounds, %
$A_k \ge \max\big\{\frac{k}{2\hat{L}},\,  \frac{1}{2\hat{L}}\big(1+\frac{\gamma}{2\hat{L}} \big)^{k-1}\big\}.$ 
\end{theorem}
\begin{proof}
Combining Lemmas \ref{lemma:gap-fn-construction}, \ref{lemma:coder-gap-change} and that, by initialization, $a_0 = 0$ and $\vx_0 = \vx_{-1} = \vzero,$ we have
\begin{align}
A_k\Gap(\vxt_k; \vu) 
\leq \;&  a_k \innp{\mF(\vx_k) - \vp_k, \vx_k - \vu} - \frac{1 + A_{k-1}\gamma}{4}\|\vx_{k-1} - \vx_k\|^2\nonumber\\
    &- \frac{1 + A_k \gamma}{2}\|\vu - \vx_k\|^2 
    + \frac{1}{2}\|\vu  - \vx_0\|^2.\notag
\end{align}
Following the same arguments as in the proof of Lemma~\ref{lemma:coder-gap-change}, we have, $\forall \alpha > 0,$
\begin{align*}
    \innp{\mF(\vx_k) - \vp_k, \vx_k - \vu} \leq \frac{\alpha \hat{L}^2}{2}\|\vx_k - \vx_{k-1}\|^2 + \frac{1}{2\alpha}\|\vx_k - \vu\|^2.
\end{align*}
Choosing $\alpha = \frac{2 a_{k}}{1 + A_{k}\gamma}$ and  by our setting, $a_{k} = \frac{1+A_{k-1}\gamma}{2\hat{L}}, A_k \ge A_{k-1},$ we get 
$$
    A_k\ \Gap(\vxt_k; \vu) \leq \frac{1}{2}\|\vu  - \vx_0\|^2 - \frac{1 + A_k \gamma}{4}\|\vu - \vx_k\|^2.
$$
% From Lemma~\ref{lemma:gap-fn-construction}, we have $\Gap(\vxt_k; \vu) \leq \frac{1}{A_k}\sum_{i=1}^k a_i \Gap(\vx_i; \vu) \leq \varepsilon_k(\vu)$, which gives the first two bounds. The third bound follows by $\Gap(\vx; \vx^*) \geq 0,$ $\forall \vx.$ 

Finally, as Algorithm~\ref{alg:coder} sets $a_i =  \frac{1+\gamma A_{i-1}}{2\hat{L}}, A_i = A_{i-1} + a_i,$ $\forall i \geq 1,$ we have $A_k \geq \frac{k}{2\hat{L}}$ (as $\gamma \geq 0$ and $A_0 = 0$) and $A_k \geq A_{k-1}\big(1 + \frac{\gamma}{2\hat{L}}\big) \geq A_1\big(1 + \frac{\gamma}{2\hat{L}}\big)^{k-1}$,  $\forall k \geq 1$.
\end{proof}
The implications of Theorem~\ref{thm:main-coder} on problems \eqref{eq:problem-comp-opt} and \eqref{eq:problem-min-max} are summarized in the following two corollaries. Here we only state the bounds for the optimality gap in Corollaries \ref{cor:comp-opt} and \ref{cor:min-max}, as the bounds on $\|\vx_k - \vx^*\|$ are immediate from Theorem~\ref{thm:main-coder}. Proofs are standard and are omitted for brevity. %Due to the limited space, we ignore the proofs of Corollaries \ref{cor:comp-opt} and \ref{cor:min-max} here. 
% \cb{I don't think that Corollary 1 is necessary as we give better algorithms in the next section.} \jd{Sure, but still nice to show that we can get results for \eqref{eq:problem-comp-opt} too from the general result.}
% \cb{OK.}
%
\begin{restatable}{corollary}{corpco}\label{cor:comp-opt}
Consider Problem~\eqref{eq:problem-comp-opt}, where the gradient of $f$ is $L$-Lipschitz in the context of Assumption~\ref{assmpt:general} %
and $g$ is $\gamma$-strongly convex for $\gamma \geq 0$, and let $\vx^* \in \argmin_{\vx} f(\vx) + g(\vx)$. If Algorithm~\ref{alg:coder} is applied to \eqref{eq:problem-comp-opt} with $\mF = \nabla f,$ then
$$
    f(\vxt_k) + g(\vxt_k) - (f(\vx^*) + g(\vx^*))\leq \frac{\|\vx^* - \vx_0\|^2}{2A_k}, 
$$
where $A_k \ge \max\big\{\frac{k}{2\hat{L}},\,  \frac{1}{2\hat{L}}\big(1+\frac{\gamma }{2\hat{L}} \big)^{k-1}\big\}.$
\end{restatable}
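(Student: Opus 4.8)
The plan is to specialize the master inequality~\eqref{eq:thm-main-gen} to the operator $\mF = \nabla f$ and the reference point $\vu = \vx^*$, rather than to invoke the generic gap bound of Theorem~\ref{thm:main}: for a gradient operator the convexity of $f$ turns the inner-product terms $\innp{\nabla f(\vx_j),\vx_j - \vx^*}$ directly into function-value gaps, which is exactly what the statement asks for.

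First I would observe that $\mF = \nabla f$ satisfies the hypotheses of Theorem~\ref{thm:main}: monotonicity of $\nabla f$ is equivalent to convexity of $f$, and the Lipschitz hypothesis of the corollary is precisely (a special case of) Assumption~\ref{assmpt:Lip-const} with constant $L$. Hence~\eqref{eq:thm-main-gen} holds for every $\vu \in \dom(g)$; taking $\vu = \vx^*$ and dropping the nonnegative term $\tfrac{1+\gamma A_k}{4}\|\vx^* - \vx_k\|^2$ gives
\begin{equation}\notag
\sum_{j=1}^k a_j\big(\innp{\nabla f(\vx_j),\,\vx_j - \vx^*} + g(\vx_j) - g(\vx^*)\big) \;\le\; \tfrac12\|\vx^* - \vx_0\|^2 .
\end{equation}

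Next I would invoke convexity of $f$, namely $\innp{\nabla f(\vx_j),\vx_j - \vx^*} \ge f(\vx_j) - f(\vx^*)$ for each $j$, to lower-bound the left-hand side by $\sum_{j=1}^k a_j\big((f+g)(\vx_j) - (f+g)(\vx^*)\big)$. Since $\vxt_k = \tfrac{1}{A_k}\sum_{j=1}^k a_j\vx_j$ is a convex combination of the iterates (recall $A_k = \sum_{j=1}^k a_j$) and $f+g$ is convex, Jensen's inequality yields $A_k\,(f+g)(\vxt_k) \le \sum_{j=1}^k a_j\,(f+g)(\vx_j)$. Chaining these three bounds and dividing by $A_k$ gives the stated optimality-gap estimate, while the lower bound on $A_k$ carries over verbatim from Theorem~\ref{thm:main}.

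The argument is essentially a one-line specialization, so there is no real obstacle; the only point requiring care is the decision not to start from the gap bound $\Gap(\vxt_k;\vx^*) \le \tfrac{1}{2A_k}\|\vx^*-\vx_0\|^2$. Starting there would force the use of $L$-smoothness to convert $\innp{\nabla f(\vx^*),\vxt_k - \vx^*}$ into $f(\vxt_k) - f(\vx^*)$, leaving behind a spurious $\tfrac{L}{2}\|\vxt_k - \vx^*\|^2$ term; applying convexity to each iterate \emph{before} averaging, as above, avoids this.
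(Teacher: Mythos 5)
Your proposal is correct and follows essentially the same route as the paper's proof: apply Eq.~\eqref{eq:thm-main-gen} with $\vu = \vx^*$, use convexity of $f$ to turn $\innp{\nabla f(\vx_j),\vx_j-\vx^*}$ into the function-value gap, and then Jensen's inequality on the weighted average; the paper merely presents the same three steps in the reverse order. Your remark about why one should start from Eq.~\eqref{eq:thm-main-gen} rather than from $\Gap(\vxt_k;\vx^*)$ is a good one and correctly anticipates the paper's choice.
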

\begin{restatable}{corollary}{cormm}\label{cor:min-max}
Consider Problem~\eqref{eq:problem-min-max}, where $\phi$ is convex-concave and its gradient is $L$-Lipschitz in the context of Assumption~\ref{assmpt:general}, % 
and $g_1, g_2$ are $\gamma$-strongly convex ($\gamma \geq 0$ ) with compact domains. If Algorithm~\ref{alg:coder} is applied to~\eqref{eq:problem-min-max} with $\vx = \big[\substack{\vx^1\\ \vx^2}\big],$ $\mF(\vx) = \big[\subalign{\nabla_{\vx^1} \phi(\vx^1, \vx^2)\\ - \nabla_{\vx^2} \phi(\vx^1, \vx^2)}\big],$ and $g(\vx) = g^1(\vx^1) - g^2(\vx^2),$ then
$$
    \max_{\vy^2 \in \sR^{d^2}} \Phi(\vxt_{k}^1, \vy^2) - \min_{\vy^1 \in \sR^{d^1}} \Phi(\vy^1, \vxt_{k}^2) \leq \frac{D_1^2 + D_2^2}{2 A_k},
$$
where $D_1 = \sup_{\vx^1, \vy^1 \in \dom(g^1)}\|\vx^1 - \vy^1\|$, $D_2 = \sup_{\vx^2, \vy^2 \in \dom(g^2)}\|\vx^2 - \vy^2\|,$ and $A_k \ge \max\big\{\frac{k}{2\hat{L}},\,  \frac{1}{2\hat{L}}\big(1+\frac{\gamma }{2\hat{L}} \big)^{k-1}\big\}.$
\end{restatable}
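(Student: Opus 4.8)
The plan is to obtain Corollary~\ref{cor:min-max} by specializing Theorem~\ref{thm:main} to the variational-inequality reformulation of~\eqref{eq:problem-min-max} and converting the resulting bound into a saddle-point (primal--dual) gap bound. First I would set up the reduction from Section~\ref{sec:prelims}: run Algorithm~\ref{alg:coder} on Problem~\eqref{eq:main-problem} with $\vx=[\vx^1;\vx^2]$, $\mF(\vx)=[\nabla_{\vx^1}\phi(\vx);-\nabla_{\vx^2}\phi(\vx)]$, and the block-separable composite term consisting of $g^1$ on the first block and $g^2$ on the second (the minus sign in $\Phi=\phi+g^1-g^2$ being consistent with the minus sign on the second component of $\mF$, so the block-$2$ prox step still operates on the convex function $g^2$; in the VI the effective composite term behaves as $g(\vx)=g^1(\vx^1)+g^2(\vx^2)$). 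I would then check the hypotheses of Theorem~\ref{thm:main}: $\mF$ is monotone (the standard fact that the partial-gradient operator of a smooth convex-concave function is monotone), $g$ is block-separable and $\gamma$-strongly convex since each $g^i$ is, and $\mF$ is $L$-Lipschitz in the sense of Assumption~\ref{assmpt:Lip-const} because $\mF$ differs from $\nabla\phi$ only by a sign flip on a block, which leaves the norms in~\eqref{eq:block-Lipschitz} unchanged.

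The main step is then to invoke the per-iterate inequality~\eqref{eq:thm-main-gen} (rather than the averaged $\Gap(\vxt_k)$ bound, which with $\mF$ evaluated at $\vu$ would point the convexity estimates the wrong way): dropping the nonnegative $\tfrac{1+\gamma A_k}{4}\|\vu-\vx_k\|^2$ term, for every $\vu=[\vy^1;\vy^2]$ we have $\sum_{j=1}^{k}a_j\big(\innp{\mF(\vx_j),\vx_j-\vu}+g(\vx_j)-g(\vu)\big)\le\tfrac12\|\vu-\vx_0\|^2$. I would then use the convex-concave estimate that for each $j$,
\begin{equation*}
\innp{\mF(\vx_j),\vx_j-\vu}\ge \phi(\vx_j^1,\vy^2)-\phi(\vy^1,\vx_j^2),
\end{equation*}
which follows by combining convexity of $\phi(\cdot,\vx_j^2)$ with concavity of $\phi(\vx_j^1,\cdot)$ and linearizing at $\vx_j$; adding $g(\vx_j)-g(\vu)=g^1(\vx_j^1)+g^2(\vx_j^2)-g^1(\vy^1)-g^2(\vy^2)$ to both sides makes the right-hand side equal to $\Phi(\vx_j^1,\vy^2)-\Phi(\vy^1,\vx_j^2)$ exactly. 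Substituting and dividing by $A_k$ gives $\tfrac1{A_k}\sum_j a_j\big(\Phi(\vx_j^1,\vy^2)-\Phi(\vy^1,\vx_j^2)\big)\le\tfrac1{2A_k}\|\vu-\vx_0\|^2$.

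To conclude, I would apply Jensen's inequality separately in each argument --- $\Phi(\cdot,\vy^2)$ convex gives $\Phi(\vxt_k^1,\vy^2)\le\tfrac1{A_k}\sum_j a_j\Phi(\vx_j^1,\vy^2)$, and $\Phi(\vy^1,\cdot)$ concave gives $\tfrac1{A_k}\sum_j a_j\Phi(\vy^1,\vx_j^2)\le\Phi(\vy^1,\vxt_k^2)$ --- so that $\Phi(\vxt_k^1,\vy^2)-\Phi(\vy^1,\vxt_k^2)\le\tfrac1{2A_k}\big(\|\vy^1-\vx_0^1\|^2+\|\vy^2-\vx_0^2\|^2\big)$. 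Taking $\sup$ over $\vy^1\in\dom(g^1)$ and $\vy^2\in\dom(g^2)$, the left-hand side decouples into $\max_{\vy^2}\Phi(\vxt_k^1,\vy^2)-\min_{\vy^1}\Phi(\vy^1,\vxt_k^2)$ and the right-hand side is at most $\tfrac{(D^1)^2+(D^2)^2}{2A_k}$ by the definitions of $D^1,D^2$; the lower bound on $A_k$ is quoted verbatim from Theorem~\ref{thm:main}. I expect the only genuinely delicate point to be the convex-concave estimate together with checking that the $g^1$- and $g^2$-terms combine so the right-hand side collapses to exactly $\Phi(\vx_j^1,\vy^2)-\Phi(\vy^1,\vx_j^2)$; the remainder is Jensen's inequality and bookkeeping with the diameters.
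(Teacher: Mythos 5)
Your proof is correct and follows essentially the same route as the paper's: invoke Eq.~\eqref{eq:thm-main-gen}, use convexity of $\phi(\cdot,\vy^2)$ and concavity of $\phi(\vy^1,\cdot)$ to identify the left-hand side with the weighted primal--dual gap, apply Jensen, and take suprema over the domains. The paper presents these steps in a slightly different order (Jensen first, then linearization of $\phi$), and you are also right that the sign in the corollary's stated $g(\vx)=g^1(\vx^1)-g^2(\vx^2)$ is a typographical slip --- the VI composite term is effectively $g^1(\vx^1)+g^2(\vx^2)$, which is what makes the $g$-terms cancel into $\Phi(\vx_j^1,\vy^2)-\Phi(\vy^1,\vx_j^2)$.
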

\markupadd{\begin{remark}
    Convergence bound for $\Gap(\vxt_k; \vu)$ obtained in Theorem~\ref{thm:main-coder}, up to constants, is the same as the convergence bound that one would obtain from full vector update methods such as mirror-prox \cite{nemirovski2004prox} and dual extrapolation \cite{nesterov2007dual}, but with the full operator Lipschitz constant $L$ being replaced by our new Lipschitz constant $\hat{L}.$ As discussed before, $\hat{L}$ is never larger than $\sqrt{m}L,$ but it is usually smaller than $L$, due to our new block coordinate Lipschitz assumption (Assumption~\ref{assmpt:new-Lip} involving matrices $\mQ^j$, based on which $\hat{L}$ is defined)  that generally aligns better with the problem geometry. 
\end{remark}}
\begin{remark}\label{rem:non-convergence-of-vanilla-methods}
It seems natural to ask whether the extrapolation step in CODER (Line 7 in Algorithm~\ref{alg:coder}) is really needed or not. Let us refer to the cyclic and randomized coordinate method variants without the extrapolation step (i.e., with $\vq_k^i = \vp_k^i$ in Step 7 of CODER) as the proximal CCM (PCCM) and proximal RCM (PRCM). These methods only perform (block) coordinate dual-averaging steps (Step 9 of CODER). 
% Even though PCCM can be observed to perform well in the conducted experiments (see Section~\ref{sec:num-exp}), unlike CODER, neither PCCM nor PRCM are guaranteed to converge on the class of GMVI problems.
In particular, it is easy to construct examples on which both PCCM and PRCM diverge. Perhaps the simplest such example is the bilinear problem $\min_{\vx \in \sR^d}\max_{\vy \in \sR^d}\innp{\vx, \vy},$ where each pair $(x_i, y_i)$ is assigned to the same  block. In this case, the block coordinate updates of PCCM and PRCM boil down to (simultaneous) gradient descent-ascent updates, and, due to the separability of the objective function, the divergent behavior of both methods follows as a simple corollary of folklore results on the divergence of gradient descent-ascent  (see, e.g.,~\cite{salimans2016improved,liang2018interaction}). 
\end{remark}

\paragraph{Computational considerations}
At a first glance, it may seem like the usefulness of our method is limited by the parameter tuning required for constants $\hat{L}$ and $\gamma,$ which is a standard concern for most first-order methods, especially in the (block) coordinate setting. However, as we now argue, for most cases of interest this is not a concern. In particular, the strong convexity of $g$ typically comes from regularization, which is a design choice and as such is typically known. On the other hand, it turns out that for our approach to work, the knowledge of the Lipschitz parameter $\hat{L}$ is not required at all, as this parameter can be estimated adaptively using the standard doubling trick or a backtracking search as in, e.g.,~\cite{nesterov2015universal}. This can be concluded from the fact that the only place in the analysis where the Lipschitz constant of $\mF$ is used is to require $\|\mF(\vx_k) - \vp_k\| \leq \hat{L}\|\vx_k - \vx_{k-1}\|$, which allows a simple verification and update to $\hat{L}$ whenever the stated inequality is not satisfied. For completeness, in Appendix~\ref{appx:param-free}, we provide a parameter-free version of CODER.

\section{Variance Reduced CODER}\label{sec:vr-coder} 
In this section, we consider the GMVI problem in \eqref{eq:main-problem} with the finite-sum assumption $\mF(\vx) = \frac{1}{n}\sum_{t=1}^n \mF_t(\vx).$ For this problem, we show that we can incorporate a variance reduction strategy into the CODER algorithm and further reduce the per-iteration cost and improve the overall complexity result. The resulting algorithm, VR-CODER, is provided in Algorithm~\ref{alg:coder-vr}.  

VR-CODER combines an SVRG-type variance reduction with the CODER algorithm. It is a three-loop algorithm, there the outer two loops define the epochs and iterations of SVGR-type variance reduction in a standard way, and the innermost loop performs cyclic updates of CODER. To incorporate variance reduction, VR-CODER replaces the operator extrapolation step $\vq_k^j$ with a step that performs variance reduced operator extrapolation and, in addition, contains point extrapolation (Step~10 in Algorithm~\ref{alg:coder-vr}). In the definition of $\vq_{s, k}^j$ from Step~10, $\mF_t^{j}(\vy_{s,k,j})- \mF_t^{j}(\hat{\vx}_{s-1}) + \boldsymbol{\mu}_{s}^j$ is an unbiased estimate of $\mF^{j}(\vy_{s,k,j})$ of standard SVRG type, while $\mF_t^j(\vx_{s, k-1}) - \mF_t^{j}(\vy_{s,k-1,j})$ is an unbiased estimate of $\mF^j(\vx_{s, k-1}) - \mF^{j}(\vy_{s,k-1,j})$. Hence, $\mF_t^{j}(\vy_{s,k,j})- \mF_t^{j}(\hat{\vx}_{s-1}) + \boldsymbol{\mu}_{s}^j + \frac{a_{s,k-1}}{a_s}(\mF_t^j(\vx_{s, k-1}) - \mF_t^{j}(\vy_{s,k-1,j}) )$ provides an unbiased estimate of operator extrapolation step from CODER (Step~7  in Algorithm~\ref{alg:coder}). In addition to the unbiased estimate of CODER operator extrapolation, $\vq_{s, k}^j$ also contains point extrapolation (the $\beta(\vx_{s, k-1}^j - \vxh^j_{s-1})$ term). This point extrapolation is utilized in the analysis to bound the error terms. It is unclear whether the same results can be obtained without this point extrapolation. 
 
\begin{algorithm}[t!]
\caption{Variance Reduced Cyclic cOordinate Dual avEraging with extRapolation (VR-CODER)}\label{alg:coder-vr}
\begin{algorithmic}[1]
\STATE \textbf{Input:} $\vx_0 = \vx_{1,0}=\vxh_0\in\mathrm{dom}(g), \gamma \geq 0, L>0, \hat{L}>0, S, K, m, \{\gS^1, \dots, \gS^m\}$
\STATE \textbf{Initialization:} %$\vp_0 = \mF(\vx_0), 
$\vz_{1,0} = \vzero$, $a_0= A_0 = 0, a_1 = A_1 = \min\Big\{ \frac{\sqrt{K}}{8L},  \frac{K}{8\hat{L}}\Big\}, \beta =  \frac{2L}{\sqrt{K}}$
\FOR{$s=1$ to $S$}
    \STATE \label{step:vr-coder-mu} $\boldsymbol{\mu}_{s} = \mF(\hat{\vx}_{s-1})$ \hfill{\COMMENT{full epoch operator}}
    % \STATE $a_s =  \frac{1+\gamma A_{s-1}}{2{L}}, A_s = A_{s-1} + a_s$ 
    \STATE $a_{s, 0}  = a_{s-1},\; a_{s,1}=a_{s,2}=\cdots =a_{s,K} = a_s$
    \FOR{ $k=1$ to $K$}
         
        %  \STATE (How to initialize $\vp_{s, 0}$?)
        \FOR{ $j=1$ to $m$ }
            % \STATE $\mF^j_t(\vy_{s,k,j}) =   \mF_t^{j}(\vx^{1}_{s,k}, \ldots, \vx^{j-1}_{s,k}, \vx^{j}_{s, k-1},\ldots   \vx^{m}_{s, k-1})$
            \STATE $\vy_{s,k,j} = (\vx^{1}_{s,k}, \ldots, \vx^{j-1}_{s,k}, \vx^{j}_{s, k-1},\ldots   \vx^{m}_{s, k-1})$
            \STATE Choose $t$ in $[n]$ uniformly at random
            \STATE $\vq^j_{s,k} = \mF_t^{j}(\vy_{s,k,j})- \mF_t^{j}(\hat{\vx}_{s-1}) + \boldsymbol{\mu}_{s}^j + \frac{a_{s,k-1}}{a_s}(\mF_t^j(\vx_{s, k-1}) - \mF_t^{j}(\vy_{s,k-1,j}) ) +  \beta(\vx^j_{s, k-1} - \hat{\vx}_{s-1}^j)$ \hfill{\COMMENT{variance-reduced extrapolated operator}} \label{step:vr-coder-extrapolation}
            \STATE $\vz_{s,k}^j = \vz_{s,k-1}^j + a_s\vq^j_{s,k}$
            \STATE $\vx_{s,k}^j = \mathrm{prox}_{(A_{s-1} + \frac{a_s k}{K}) g^j}(\vx_0^j - \vz_{s,k}^j/K)   $
        \ENDFOR
    \ENDFOR
    \STATE \label{step:vr-coder-step-size} $a_{s+1} = \min\Big\{\big(1 + \frac{\gamma}{\beta}\big)a_s,   \, (1+A_{s}\gamma)\min\Big\{ \frac{\sqrt{K}}{8L},  \frac{K}{8\hat{L}}\Big\}\Big\}, A_{s+1} = A_s + a_{s+1}$
    \STATE $\hat{\vx}_s =\frac{1}{K}\sum_{k=1}^K\big( \frac{\beta }{\beta+\gamma}\vx_{s, k-1} + \frac{\gamma}{\beta+\gamma}\vx_{s,k}\big), \vxt_s =\frac{1}{K}\sum_{k=1}^K \vx_{s,k}$ 
    \STATE $\vy_{s+1,0,j} = \vy_{s, K, j}, \;\forall j\in[m]$
    \STATE $\vz_{s+1, 0} = \vz_{s, K},\, \vx_{s+1, 0} = \vx_{s, K},\, \vx_{s+1, -1} = \vx_{s, K-1}$
\ENDFOR
\STATE \textbf{return} $\frac{1}{A_S}\sum_{s=1}^S a_s \tilde{\vx}_s,$ $\vx_{S+1, 0}.$ 
\end{algorithmic}	
\end{algorithm}

% \cb{$\vx_{s+1,-1}$ has not been used in the algorithm, so we can just ignore it; but in the proof, we can define it. }

\paragraph{Assumptions for VR-CODER} To analyze VR-CODER, we assume that Assumptions~\ref{ass:mono} and \ref{assmpt:strongly-convex} hold for $\mF$ and $g$, respectively.  Meanwhile, we assume that Assumptions~\ref{ass:Lip} and \ref{assmpt:new-Lip} apply not only to $\mF$ but to all individual operators $\mF_t$ in the definition of $\mF.$  It is an open question whether any of these assumptions can be relaxed to only apply to $\mF.$ 

%\cb{The definition for filtration needs to be changed to $\gF_{s,k,j}$. }

%\cb{make assumptions further here.}

%\cb{give description for this algorithm; show how variance reduction performs.}

\iffalse
\begin{assumption}\label{ass:Lip-fs}
Each $\mF_t: \sR^d \to \sR^d$ is $L$-Lipschitz: $\forall \vx, \vy,$ $ \|\mF_t(\vx) - \mF_t(\vy)\| \leq L\|\vx - \vy\|.$
\end{assumption}

\begin{assumption}\label{assmpt:new-Lip-fs} 
There exists a sequence of positive semidefinite matrices $\{\mQ^j\}_{j=1}^m$ such that each $\mF_t^j(\cdot)$ is $1$-Lipschitz continuous w.r.t.~the norm $\|\cdot\|_{\mQ^j},$ i.e., $\forall \vx, \vy \in \sR^d,$
%
\begin{equation}\label{eq:block-Lipschitz-fs}
\|\mF_t^{j}(\vx) - \mF_t^{j}(\vy)\| \le \sqrt{(\vx-\vy)^T\mQ^j(\vx-\vy)} = \|\vx - \vy\|_{\mQ^j},
\end{equation}
%
where $\mF_t^j(\vx)$ is the $d^j$-dimensional vector comprised of the $\gS^j$ coordinates of $\mF_t(\vx).$ 
Further, $\sqrt{\Big\|\sum_{j=1}^m {\mQh}^j \Big\|}  = \hat{L} < \infty,$ where $\mQh^j$ is defined in \emph{Eq.}~\eqref{eq:Q-hat}.  
%\end{equation}
\end{assumption}

\fi

\paragraph{VR-CODER Convergence Analysis} 

Same as in the notation for $\mF$ in Section~\ref{sec:prelims}, we let $\mF_t^j$ denote its coordinate components indexed by $\gS^j$. 
For $s\ge 1$, $k\in[K]$ and $j\in[m]$, we define the generalized estimation sequence, 
\begin{equation}
\psi_{s,k}^j(\vx^j) = \psi_{s,k-1}^j(\vx^j) + a_s \Big(\big\langle\vq_{s,k}^j,  \vx^j - \vx_{s,k}^j\big\rangle + g^j(\vx^j) - g^j(\vx_{s,k}^j)\Big), \label{eq:es-def}
\end{equation}
where we assume that $\psi_{s, K}^j = \psi_{s+1,0}^j$ and $\psi_{1,0}^j(\vx^j) =  \frac{K}{2}\|\vx^j - \vx_0^j\|^2$. Meanwhile, we define $\psi_{s, k}(\vx) = \sum_{j=1}^m\psi_{s, k}^j(\vx^j)$ and $\vq_{s,k} = (\vq_{s,k}^1, \vq_{s,k}^2,\ldots, \vq_{s,k}^m)$. Then similar to Section~\ref{sec:coder}, from the definition of $\psi_{s,k}(\vx)$, we have that $\psi_{s,k}(\vx_{{s, }k}) = \psi_{s,k-1}(\vx_{{s, }k}).$  As $g(\vx) = \sum_{i=1}^m g^j(\vx^j)$ is assumed to be $\gamma$-strongly convex, it further follows that $\psi_{s,k}(\vx)$ for $0\le k < K,$ is $K(1+ A_{s-1}\gamma)$-strongly convex. 

The convergence analysis follows the same strategy as in Section~\ref{sec:coder}: our goal is to show that the appropriate gap  function (in this case the relaxed gap function $\widehat{\Gap}$) contracts at rate $1/A_k$, where $A_k$ grows as fast as possible. However, since VR-CODER is a randomized algorithm (due to using stochastic variance-reduced estimates of operator extrapolation), the gap function is bounded \emph{in expectation}. The process of bounding the error terms $E_{s, k}(\vu)$ appearing in the bound on the gap function, however, becomes much more technical than in the CODER analysis. For this reason, the proof of the main technical lemma (Lemma~\ref{lem:error-vr}) is deferred to Appendix~\ref{appx:proof-lemma-error-vr}.  

As before, we start with the generic bound on the gap function. 
%The estimate of the relaxed gap $\widehat{\Gap}(\tilde{\vx}_s; \vu)$ can then be bounded as follows. 

% Meanwhile, $\forall s\ge 1$, $k\in[K],j\in [m]\!$, let 
% \begin{equation}
% \vy_{s, k, j} = (\vx^{1}_{s,k}, \ldots, \vx^{j-1}_{s,k}, \vx^{j}_{s,k-1},\ldots   \vx^{m}_{s,k-1}) \label{eq:y-s-k-1}    
% \end{equation}
%  with $\vy_{s, K, j} = \vy_{s+1, 0, j}$. Then in Algorithm \ref{alg:coder-vr}, we have $\mF^j_t(\vy_{s,k,j}) = \mF_t(\vy_{s, k, j}).$ 

% \cb{Give the definition of $\vx_{s,k}$. }

% \cb{ Give the guarantee in terms of expectation-max.  }

% \cb{Introduce filtration. }

\begin{lemma}\label{lem:gap-vr}
{Let Assumptions~\ref{ass:mono} and \ref{assmpt:strongly-convex} hold. }In the $s$-th epoch {of Algorithm~\ref{alg:coder-vr}}, for all $\vu\in\dom(g)$, the gap function $\widehat{\Gap}(\vxt_s; \vu) $ w.r.t. $\vxt_s$ can be bounded as follows: for all $S\ge 1, K\ge 1,$ 
\begin{align}
 \sum_{s=1}^S a_s \widehat{\Gap}(\vxt_s; \vu)  \le\;&    \frac{1}{2}\|\vu -  \vx_{0}\|^2 \!-\!  \frac{1 + A_S \gamma}{2}\|\vu - \vx_{S+1,0}\|^2 +\frac{1}{K}\sum_{s=1}^S \sum_{k=1}^K E_{s,k}(\vu), \notag
\end{align} 
where 
\begin{align}
E_{s,k}(\vu) \!=\;&   a_s \innp{\mF(\vx_{s,k})\! - \!\!\vq_{s,k}, \vx_{s,k} - \vu} \!-\!  \frac{K(1+A_{s-1}\gamma)}{2}\|\vx_{s,k} - \vx_{s,k-1}\|^2 \nonumber\\
\;&   - \frac{a_s \gamma}{2}\|\vx_{s,k} - \vu\|^2. \notag
\end{align}
\end{lemma}
\begin{proof}
As $\mF$ is monotone and $g$ is $\gamma$-strongly convex, by the definition of $\tilde{\vx}_s,$ we have: $\forall \vu,$
\begin{align}
&\; \sum_{s=1}^S K a_s \widehat{\Gap}(\vxt_s; \vu) =  \sum_{s=1}^S \sum_{k=1}^K  a_s\innp{\mF(\vu) + g'(\vu), \vx_{s,k} - \vu}    \nonumber\\
% =&\; \sum_{k=1}^K  a_s\innp{\mF(\vu) + g'(\vu), \vxt_{s} - \vu}    \nonumber\\
\leq&\;\sum_{s=1}^S \sum_{k=1}^K a_s \big(\innp{\mF(\vx_{s,k}), \vx_{s,k} - \vu} + g(\vx_{s,k}) - g(\vu) - \frac{\gamma}{2}\|\vx_{s,k} - \vu\|^2\big).\label{eq:gap-vr-1}  
% =&\; \sum_{s=1}^S \sum_{k=1}^K a_s\Big(\overline{\Gap}(\vx_{s,k}; \vu) - \frac{\gamma}{2}\|   \vx_{s,k} - \vu\|^2\Big). 
\end{align}
Meanwhile, in Eq.~\eqref{eq:gap-vr-1},  we also have 
\begin{align}
 \sum_{s=1}^S\sum_{k=1}^K & a_s \big(\innp{\mF(\vx_{s,k}), \vx_{s,k} - \vu} + g(\vx_{s,k}) - g(\vu)\big)  \nonumber\\
    =\; & \sum_{s=1}^S\sum_{k=1}^K a_s \big(\innp{\vq_{s,k}, \vx_{s,k} - \vu} + g(\vx_{s,k}) - g(\vu)\big) -  \psi_{1,0}(\vu) \nonumber\\
    &+ \psi_{1,0}(\vu) + \sum_{s=1}^S \sum_{k=1}^K a_s \innp{\mF(\vx_{s,k}) - \vq_{s,k}, \vx_{s,k} - \vu} \nonumber\\
    =\; &-\psi_{S,K}(\vu) +  \psi_{1,0}(\vu) + \sum_{s=1}^S\sum_{k=1}^K a_s\innp{\mF(\vx_{s,k}) - \vq_{s,k}, \vx_{s,k} - \vu}\nonumber\\
     \le\; &  - \Big(\psi_{S,K}(\vx_{S,K}) + \frac{K(1 + A_S \gamma)}{2}\|\vu - \vx_{S,K}\|^2\Big)  
    +\psi_{1,0}(\vu) \nonumber\\
    \;& + \sum_{s=1}^S\sum_{k=1}^K a_s \innp{\mF(\vx_{s,k}) - \vq_{s,k}, \vx_{s,k} - \vu}, \label{eq:gap-vr-2}
\end{align}
where the third equality is by the definition of $\psi_{s,K}(\vu)$, and the last inequality is by the optimality condition of $\vx_{S,K}$ and $K(1 + A_S\gamma)$-strong convexity of $\psi_{S,K}$, which leads to $\psi_{S,K}(\vu) \ge \psi_{S,K}(\vx_{S,K}) + \frac{K(1 +  A_S\gamma)}{2}\|\vu - \vx_{S,K}\|^2.$ 

% On the other hand, for $s\ge 1, k\in[K]$, $$\psi_{s, k-1}(\vx_k) - \psi_{s, k-1}(\vx_{k-1}) \geq \frac{K(1 + A_{s-1}\gamma)}{2}\|\vx_{s, k} - \vx_{s, k-1}\|^2$$ (as discussed in the paragraph above). Meanwhile, by definition, we also have $\psi_{s, K} = \psi_{s+1, 0}$, $\vx_{s, K} = \vx_{s+1,0}$, $\psi_{1,0}(\vx_{1,0}) = \psi_{1,0}(\vx_{0}) = 0$. 

On the other hand, we also have
\begin{align}
 \psi_{S,K}(\vx_{S,K}) =\;&\sum_{s=1}^S \sum_{k=1}^K (\psi_{s,k}(\vx_{s,k}) - \psi_{s,k-1}(\vx_{s,k-1})) + \psi_{1,0}(\vx_{1,0}) \nonumber\\
 = \;&\sum_{s=1}^S \sum_{k=1}^K (\psi_{s,k-1}(\vx_{s,k}) - \psi_{s,k-1}(\vx_{s,k-1}))\nonumber\\
 \ge\;& \sum_{s=1}^S\sum_{k=1}^K \frac{K(1+ A_{s-1}\gamma)}{2}\|\vx_{s,k} - \vx_{s,k-1}\|^2, \label{eq:gap-vr-3}
\end{align}
where the first equality is by $\psi_{s, K} = \psi_{s+1, 0}$, $\vx_{s, K} = \vx_{s+1,0}$, 
the second equality is by the fact that $\psi_{s,k}(\vx_{s,k})=\psi_{s,k-1}(\vx_{s,k})$ and $\psi_{1,0}(\vx_{1,0}) = \psi_{1,0}(\vx_{0}) = 0$, the last inequality is by the $K(1+A_{s-1}\gamma)$-strong convexity of $\psi_{s, k-1}(\cdot)$ for $k\ge 1$ and the optimality of $\vx_{s,k-1}.$ 

Combining Eqs.~\eqref{eq:gap-vr-1}--\eqref{eq:gap-vr-3}, using the definition of $E_{s,k}(\vu)$ and that, by definition, $\vx_{s+1,0}=\vx_{s,K}$, the claimed bound follows by dividing both sides by $K$. %and dividing by $K$ on both sides, we obtain Lemma \ref{lem:gap-vr}. 
\end{proof}
%

% By Lemma \ref{lem:gap-vr}, we 

%In the following, for $s\ge 1, k\in[K]$, let $\gF_{s,k}$ denote the natural filtration, containing all the randomness in the algorithm up to and including the $k$-th iteration of $s$-th epoch. For the error terms $\{E_{s,k}(\vu)\}_{k\geq 1}$ in Lemma \ref{lem:gap-vr}, by the definition of the variance reduced extrapolated operator $\vq_{s,k}, $ we have Lemma \ref{lem:error-vr}.
The following lemma bounds the cumulative error terms in expectation and its proof is provided in Appendix~\ref{appx:proof-lemma-error-vr}. 
\begin{lemma}\label{lem:error-vr}
For any fixed $\vu\in\dom(g)$, taking expectation on all the randomness in the history, we have: {in Algorithm~\ref{alg:coder-vr} and under Assumptions~\ref{ass:Lip} and \ref{assmpt:new-Lip} applying to each $F_t$,} for all $S\ge 1, K\ge 1,$
\begin{align}
\sum_{s=1}^S\sum_{k=1}^K \E\Big[E_{s,k}(\vu) \Big] 
\le\;&\sum_{j=1}^m  a_S\E[\langle \mF^j(\vx_{S+1,0}) - \mF^j(\vy_{S+1, 0, j}),  \vx_{S+1,0}^j - \vu^j\rangle]  \nonumber   \\ 
&\; -  \frac{K(1+A_{S-1}\gamma)}{8} \E[ \| {\vx}_{S+1,0} - \vx_{S+1,-1}\|^2]  \nonumber   \\
&\; - \frac{\beta K a_{S+1} }{2}\E[\|\hat{\vx}_S- \vu\|^2] + \frac{\beta K a_{1}}{2}\|{\vx}_{0} - \vu\|^2,
\end{align}
where $\vxh_S = \frac{1}{K}\sum_{k=1}^K\big( \frac{\beta }{\beta+\gamma}\vx_{S, k-1} + \frac{\gamma}{\beta+\gamma}\vx_{S,k}\big), \vx_{S+1, 0} = \vx_{S, K}, \vx_{S+1, -1} = \vx_{S, K-1}$.
\end{lemma}

We are now ready to state and prove the main result of this section.
\begin{theorem}\label{thm:main-coder-vr}
Let $\vx_0 \in \dom(g)$ be an arbitrary initial point and $\{\vx_{s,k}\}_{s\ge 1, k\in[K]}$ evolve according to Algorithm~\ref{alg:coder-vr}. Then under Assumptions~\ref{assmpt:general}, \ref{ass:mono}, and~\ref{assmpt:strongly-convex}, and Assumptions~\ref{ass:Lip} and \ref{assmpt:new-Lip} applying to all $\mF_t$, $t \in \{1, \dots, n\}$, we have that  
for all $S\ge 1, \vu \in \dom(g):$ 
\begin{align}
A_S \E\Big[\widehat{\Gap}\Big(\frac{1}{A_S}\sum_{s=1}^{S} a_s\vxt_s; \vu\Big)\Big] +  \frac{1 +  A_S \gamma}{4}\E\big[ \| \vx_{S+1,0} - \vu\|^2] \le   \frac{5}{8}\|\vu - \vx_{0} \|^2, 
\end{align}
where the expectation is w.r.t.~all the randomness in the algorithm. 
In particular, 
\begin{equation}\notag
\E\Big[\widehat{\Gap}\Big(\frac{1}{A_S}\sum_{s=1}^{S} a_s\vxt_s; \vu\Big) \Big] \le\frac{5}{8 A_S}\|\vu - \vx_0\|^2.  
\end{equation}
Further, if $\vx^*$ is any solution to Problem \eqref{eq:main-problem}, we also have %\jd{wouldn't strong convexity of $g$ guarantee uniqueness of $\vx^*$ in this case?}
\begin{equation}\notag
    \E\big[\|\vx_{S+1, 0} - \vx^*\|^2\big] \leq \frac{5}{2(1 + A_S \gamma)}\|\vx_0 - \vx^*\|^2.
\end{equation}
Let $\tau = \min\Big\{ \frac{\sqrt{K}}{8L},  \frac{K}{8\hat{L}}\Big\}.$ Then $A_S$ grows at least as fast as
$$A_S \ge %\min\Big\{ \frac{2L\tau}{\sqrt{K}\gamma}\Big(\Big( 1+\frac{\sqrt{K}\gamma}{2L} \Big)^S -1\Big)\Big),
\tau\,{\max}\{S,(1+\gamma\tau)^{S-1}\}. 
%\tau(1+\tau\gamma)^{S-1}   \Big\}\Big\}.
$$  %
\end{theorem}
\begin{proof}
Combining Lemmas \ref{lem:gap-vr} and \ref{lem:error-vr}, %
\begin{equation}
\begin{aligned}
&\; A_S \E\Big[\widehat{\Gap}\big( \frac{1}{A_S}\sum_{s=1}^{S} a_s\vxt_s; \vu)\Big] =\sum_{s=1}^S a_s \E\big[\widehat{\Gap}(\vxt_s; \vu) \big]   \\  
\le&\;    \frac{1}{2}\|\vu - \vx_{0}\|^2 -  \E\Big[\frac{1 + A_S \gamma}{2}\|\vu - \vx_{S+1,0}\|^2 \Big]  \\ 
&\; + \frac{1}{K} \sum_{j=1}^m a_S\E[\langle \mF^j(\vx_{S+1,0}) - \mF^j(\vy_{S+1, 0, j}),  \vx_{S+1,0}^j - \vu^j\rangle]     \\ 
&\; -  \frac{1+A_{S-1}\gamma}{8} \E[ \| {\vx}_{S+1,0} - \vx_{S+1,-1}\|^2 ]     \\
&\; - \frac{\beta a_{S+1} }{2}\E[\|\hat{\vx}_S- \vu\|^2] + \frac{\beta a_{1}}{2}\|\vx_{0} - \vu\|^2. 
\end{aligned}\label{eq:main-coder-vr-1}
\end{equation}

To prove the theorem, we only need to bound the inner product terms in Eq.~\eqref{eq:main-coder-vr-1}, and then use our parameter choices to cancel out any terms that do not come from the initialization. 

%Then in Eq.~\eqref{eq:main-coder-vr-1}, 
Using Cauchy-Schwarz inequality and Young's inequality, we have,
for all $j\in[m]$, 
\begin{align}
\;&\frac{a_S}{K}\langle \mF^j(\vx_{S+1,0}) - \mF^j(\vy_{S+1, 0, j}),  \vx_{S+1,0}^j - \vu^j\rangle  \nonumber  \\
%\le\;&\frac{a_S}{K}\|\mF^j(\vx_{S+1,0}) - \mF^j(\vy_{S+1, 0, j})\| \|\vx_{S+1,0}^j - \vu^j\| \nonumber  \\    
\le\;&  \frac{a_S^2}{K^2(1+A_S\gamma)}\|\mF^j(\vx_{S+1,0}) - \mF^j(\vy_{S+1, 0, j})\|^2 + \frac{1+A_S\gamma}{4} \|\vx_{S+1,0}^j - \vu^j\|^2.  \label{eq:main-coder-vr-2}
\end{align}
%In Eq.~\eqref{eq:main-coder-vr-2}, with 
Further, by Assumption \ref{assmpt:new-Lip} and the definition of $\mQh^j$ in Eq.~\eqref{eq:Q-hat}, % we have 
\begin{align}
\|\mF^j(\vx_{S+1,0}) - \mF^j(\vy_{S+1, 0, j})\|^2 \le\;& (\vx_{S+1,0}-\vy_{S+1, 0, j})^T\mQ^j(\vx_{S+1,0}-\vy_{S+1, 0, j})     \nonumber  \\ 
=\;& (\vx_{S+1,0}-\vx_{S+1, -1})^T\mQh^j(\vx_{S+1,0}-\vx_{S+1, -1}). \notag %\label{eq:main-coder-vr-3}
\end{align}

Hence: 
\begin{align}
\sum_{j=1}^m \|\mF^j(\vx_{S+1,0}) -& \mF^j(\vy_{S+1, 0, j})\|^2  \nonumber  \\ 
%\le\;& \sum_{j=1}^m (\vx_{S+1,0}-\vx_{S+1, -1})^T\mQh^j(\vx_{S+1,0}-\vx_{S+1, -1})  \nonumber  \\ 
 \le\;&  (\vx_{S+1,0}-\vx_{S+1, -1})^T\Big(\sum_{j=1}^m\mQh^j\Big)(\vx_{S+1,0}-\vx_{S+1, -1})  \nonumber  \\ 
 \le\;& \hat{L}^2\|\vx_{S+1,0}-\vx_{S+1, -1}\|^2.  \label{eq:main-coder-vr-3}
\end{align}

Thus, combining Eqs.~\eqref{eq:main-coder-vr-2} and \eqref{eq:main-coder-vr-3}, with the definition of $a_S$, we have 
\begin{align}
\;&\frac{a_S}{K} \sum_{j=1}^m\langle  \mF^j(\vx_{S+1,0}) - \mF^j(\vy_{S+1, 0, j}),  \vx_{S+1,0}^j - \vu^j\rangle  \nonumber  \\ 
\le\;& \frac{1 + A_{S-1}\gamma}{64}\|\vx_{S+1,0}-\vx_{S+1, -1}\|^2 + \frac{1+A_S\gamma}{4} \|\vx_{S+1,0} - \vu\|^2. \label{eq:main-coder-vr-4}
\end{align}

To complete bounding $\widehat{\Gap}$, it remains to combine Eqs.~\eqref{eq:main-coder-vr-1} and \eqref{eq:main-coder-vr-4} and use that, by our setting, $A_0 = 0$ and $a_1 \beta \le \frac{1}{4}.$

It remains to bound below the growth of $\{A_s\}_{s\ge 1}.$ Let $\tau = \min\big\{\frac{\sqrt{K}}{8L}, \frac{K}{8 \hat{L}}\big\}$. When $\gamma = 0,$  using the definition of step size from Step~15 in Algorithm~\ref{alg:coder-vr},  it is easy to verify that $a_s = \tau$ and $A_s = \tau s.$ When $\gamma > 0,$ 
as $\frac{1}{\beta}\ge \tau, $ we have 
\begin{align}
a_{s+1} \geq\;& \min\big\{(1 +\gamma\tau)a_s, (1 + \gamma A_s)\tau\big\} \ge     \min\big\{(1 +\gamma\tau)a_s,  \gamma\tau A_s\big\}. \label{eq:main-coder-vr-5}
\end{align}
Then we use mathematical induction to prove that for all $s\ge 1,$ $a_s \ge \gamma\tau A_{s-1}.$ 

First, for $s=1,$ we know that $a_1 = \tau \ge 0 = A_0.$ Second, assume that for an $s\ge 1,$ $a_s \ge \gamma\tau A_{s-1}.$ Then as $(1 +\gamma\tau)a_s\ge  \gamma\tau A_{s-1} + \gamma\tau a_s  =  \gamma\tau A_s$, by Eq.~\eqref{eq:main-coder-vr-5}, we have $a_{s+1}\ge \gamma\tau A_s.$ As a result, we have $A_{s+1}\ge (1+\gamma\tau) A_s  \ge (1+\gamma\tau)^{s}A_1 = \tau(1+\gamma\tau)^{s}.$
Hence, we can conclude that for all $s\ge 1,$ $A_s = \tau\,{\max}\{s,(1+\gamma\tau)^{s-1}\}.$ 
\end{proof}

In Theorem \ref{thm:main-coder-vr}, the inner number of iterations $K$ can be set to any positive integer. So, in Algorithm \ref{alg:coder-vr}, to balance the computational cost between outer loop and inner loop, we can set $K = \Theta(n)$ {(any constant times $n$; for example, we can set $K = n$).} 
%\markupdelete{Let $\tau' = \min\big\{\frac{\sqrt{n}}{L}, \frac{n}{\hat{L}}\big\}$. Then by Theorem \ref{thm:main-coder-vr}, in the general convex case (i.e., $\gamma=0$), to obtain an $\epsilon$-accurate solution $\vx^{\mathrm{out}} =  \frac{1}{A_S}\sum_{s=1}^{S} a_s\vxt_s$  such that $\sup_{\vu\in\gB}\E\Big[\widehat{\Gap}(\vx^{\mathrm{out}}; \vu)\Big]\le \epsilon$ with $\gB\in\dom(g)$ of radius $D$, we need at most $S = O(\frac{D^2}{\tau'\epsilon})$  epochs and  $O(\frac{nD^2}{\tau'\epsilon})$   arithmetic operations;  in the strongly convex case (i.e., $\gamma>0$), to obtain an $\epsilon$-accurate solution $\vx^{\mathrm{out}} =  \vx_{S+1,0}$ such that $ \E[\|\vx^{\mathrm{out}} - \vx^*\|^2] \le  \epsilon,$ we need at most  $O\Big(\frac{\log\frac{\|\vx_0-\vx^*\|^2}{\gamma\tau' \epsilon}}{ \log( 1 + \gamma\tau')}\Big)$ epochs and $O\Big(\frac{n \log\frac{\|\vx_0-\vx^*\|^2}{\gamma\tau' \epsilon}}{ \log( 1 + \gamma\tau')}\Big)$ arithmetic operations.} 
{The choice of $K$ affects the parameter $\tau = \min\{\frac{\sqrt{K}}{L}, \frac{K}{\hat{L}}\}.$ Based on the bound on the gap in Theorem 4.1, to have gap at most $\epsilon,$ it suffices that $A_S \geq \frac{5D^2}{8\epsilon}$. When $\gamma = 0$ (general monotone case), $A_S \geq \tau S,$ so it suffices that $S \geq \frac{5D^2}{8\epsilon\tau}.$ When $\gamma > 0,$ $A_S \geq \tau(1 + \gamma \tau)^{S-1}$, so it suffices that $S \geq 1 + \frac{\log(\frac{5D^2}{8\epsilon\tau})}{\log(1 + \gamma\tau)}.$ One full epoch requires $K$ iterations, each of which does one full cycle over the coordinates. Thus, the total number of arithmetic operations, assuming the cost of evaluating a block of size $d/m$ of $F$ is $c d/m$, is $SKcd = O(cnd\min\{\frac{D^2}{\epsilon\tau} ,\, \frac{\log(\frac{D^2}{\epsilon\tau})}{\log(1+\gamma \tau)}\})$.}  

{\begin{remark}
    It is worth noting that unlike CODER, VR-CODER has a complexity guarantee that depends both on the traditional Lipschitz constant $L$ and our newly introduced constant $\hat{L}.$ This is a consequence of our analysis: there is exactly one term in the analysis that requires Lipschitz constant $L$ (see the last inequality in \eqref{eq:error-vr-4}). It is unclear whether the dependence on $L$ can be avoided; from our current analysis, this does not seem possible. 
\end{remark}}

\section{{Numerical Experiment}}\label{sec:num-exp}

{To illustrate the performance of proposed algorithms, we conducted a preliminary numerical experiment on convex $\ell_1$ norm-regularized and convex elastic net-regularized SVM problems, reformulated as GMVI problems~\eqref{eq:main-problem}, as described in Example~\ref{ex:elastic-net} and Example~\ref{exam:svm}. In the experiment, we compared CODER and VR-CODER against Randomized Accelerated Primal-Dual (RAPD) algorithm~\cite{hamedani2018iteration}, PCCM, and PRCM\footnote{{PCCM and PRCM are described in Remark~\ref{rem:non-convergence-of-vanilla-methods}.}}. RAPD was chosen for comparison as the most closely related method to CODER: it performs similar gradient extrapolation, but takes randomized coordinate updates on the primal side and full vector updates on the dual side. PRCM and PCCM were selected for comparison to illustrate that the extrapolation step used by CODER does not harm the convergence speed. As discussed in Remark~\ref{rem:non-convergence-of-vanilla-methods}, the extrapolation step is necessary under arbitrary block separation of the coordinates, as without it, the algorithm would diverge in general (and, in particular, PCCM and PRCM are divergent under general block separation, but converge in our experiments, as we use single coordinate blocks). }

\begin{figure*}[t!]
    \centering
    \includegraphics[width=0.49\textwidth]{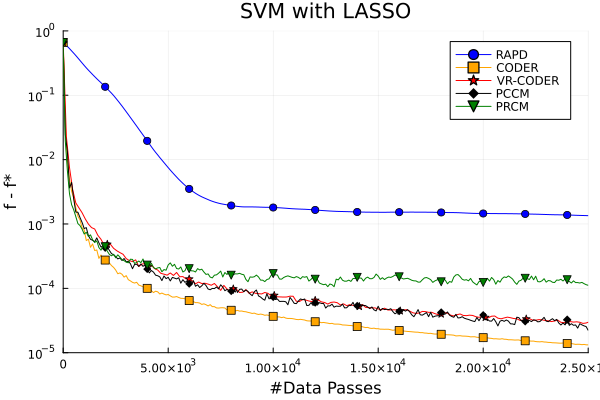} \label{fig:a1a-svm+lasso}%
    \hspace{\fill}
    \includegraphics[width=0.49\textwidth]{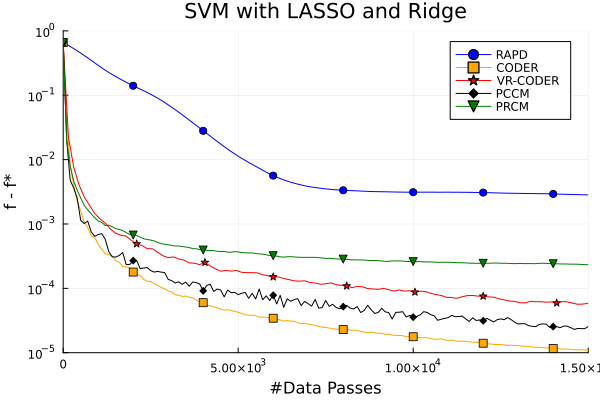} \label{fig:a1a-svm+lasso+ridge}
    \caption{{Comparison of implemented algorithms in terms of the number of full data passes. Left: SVM with LASSO regularization parameter set to $10^{-4}.$ Right: SVM with the elastic net regularization, where both LASSO and ridge regularization parameters are set to $10^{-4}.$}}%
\end{figure*}

{Fig.~\ref{fig:a1a-svm+lasso} shows the performance of the implemented algorithms in terms of the optimality gap for the original SVM problem (see Example~\ref{exam:svm} and the discussion from the introduction) on LibSVM a1a dataset, which is a sparse dataset with dimensions $d = 123$, $n = 1605$~\cite{chang2011libsvm}. In both plots, the $x$-axis displays the computational cost measured by the number of full gradient evaluations, which require one full pass over the data set. Note that since we are using SVRG-type variance reduction for VR-CODER, there is an additional full gradient computation (Step 4 in Algorithm~\ref{alg:coder-vr}) for each outer loop, hence one epoch corresponds to two data passes when $K = n$. For each algorithm, we tune each of the algorithm parameters to the best of our ability and display the best performing results. Our code is available at \url{https://github.com/ericlincc/CODER}.}

{As can be observed from the figures, CODER displays the fastest convergence compared to other algorithms, with only PCCM being competitive with it, while the implemented randomized algorithms RAPD and PRCM initially converge fast but then flatten out and progress according to a much slower convergence rate. Interestingly, VR-CODER on this example does not show the theoretical improvement compared to CODER. We conjecture that this is due to the data set not being large enough to offset the complexity of the variance reduction scheme. Nevertheless, VR-CODER remains competitive. We leave further investigation of empirical performance of CODER and VR-CODER for future work.}

\section{Conclusion}

We presented novel extrapolated cyclic coordinate method CODER and its variance-reduced counterpart for the finite-sum setting---VR-CODER, which provably converge on the class of generalized variational inequalities. This class includes convex composite optimization and convex-concave min-max optimization. CODER and VR-CODER are the first cyclic coordinate methods that provably converge on this broad class of problems. Further, for the special case of composite convex optimization problems, CODER provides improved convergence guarantees in terms of the dependence on the number of coordinate blocks compared to the state of the art. These results are enabled by a novel Lipschitz condition for the gradients that we introduced.  
Some open questions that merit further investigation remain. For example, one such question is understanding the complexity of standard optimization problem classes under our new Lipschitz condition by obtaining new oracle lower bounds.  

\section*{{Acknowledgements}}

{We are indebted to Cheuk Yin (Eric) Lin, who fully implemented the algorithms from Section~\ref{sec:num-exp}. }

\bibliographystyle{siamplain}
\bibliography{main.bib}

\appendix

\section{(Lipschitz) Parameter-Free CODER}\label{appx:param-free}
\begin{algorithm}[htb!]
\caption{Cyclic cOordinate Dual avEraging with extRapolation (CODER)}\label{alg:coder-PF}
\begin{algorithmic}[1]
\STATE \textbf{Input:} $\vx_0\in\mathrm{dom}(g), \gamma \geq 0, \hat{L}_0 > 0, m, \{\gS^1, \dots, \gS^m\}$
\STATE \textbf{Initialization:} $\vx_{-1} = \vx_{0}, \vp_0 = \mF(\vx_0)$, $a_0= A_0 = 0$.
\STATE $\psi_{0}^i(\vx^i) = \frac{1}{2}\|\vx^i - \vx_0^i\|^2,$ $1\leq i\leq m$. 
\FOR{$k = 1$ to $K$} 
\STATE $\hat{L}_k = \hat{L}_{k-1}/2$.
\REPEAT
\STATE $\hat{L}_k = 2 \hat{L}_k$.
\STATE $a_k =  \frac{1+\gamma A_{k-1}}{2{\hat{L}_k}}, A_k = A_{k-1} + a_k.$ 
\FOR{$j = 1$ to $m$} %
\STATE $\vp_k^j =   \mF^{j}(\vx^{1}_{k}, \ldots, \vx^{j-1}_{k}, \vx^{j}_{k-1},\ldots   \vx^{m}_{k-1})$
\STATE $\vq^j_k = \vp_k^j + \frac{a_{k-1}}{a_k}(\mF^j(\vx_{k-1}) - \vp_{k-1}^j)$
\STATE $\vz_k^j = \vz_{k-1}^j + a_k\vq^j_k$
\STATE $\vx_{k}^j = \mathrm{prox}_{A_k g^j}(\vx_0 - \vz_k^j)   $
\ENDFOR
\UNTIL{$\|\mF(\vx_k) - \vp_k\| \leq \hat{L}_k \|\vx_{k} - \vx_{k-1}\|$}
\ENDFOR
\STATE \textbf{return} $\vx_K$, $\tilde{\vx}_K = \frac{1}{A_K}\sum_{k=1}^{K}a_k\vx_k.$
\end{algorithmic}	
\end{algorithm}

CODER, as stated in Algorithm~\ref{alg:coder}, requires knowledge of the Lipschitz parameter $\hat{L}.$ This may seem like a limitation of our approach, especially since the Lipschitzness of $\mF$ assumed in our work is much different from the traditional Lipschitz assumptions for either the full gradient or its (block) coordinate components.

It turns out that the explicit knowledge of $\hat{L}$ is not required  at all for our algorithm to work, at least whenever the permutation over the blocks is fixed throughout the algorithm execution. This is revealed by our analysis, as the only place in the analysis where the Lipschitz assumption on $\mF$ is used is to verify that $\|\mF(\vx_k) - \vp_k\| \leq \hat{L}\|\vx_k - \vx_{k-1}\|$. By the argument used in the proof of Theorem~\ref{thm:main-coder} and the Lipschitz assumption on $\mF$ (Assumption~\ref{assmpt:general}), this condition must be satisfied for any $\hat{L} \geq \big\|\sum_{j}\mQh^j\big\|.$ Thus, a natural approach is to start with some initial estimate $\hat{L}_0 > 0$ of $\hat{L}$ and double it each time the condition $\|\mF(\vx_k) - \vp_k\| \leq \hat{L}\|\vx_k - \vx_{k-1}\|$ fails. The total number of times that this estimate can be doubled is then bounded by $\log_2(\frac{2\hat{L}}{\hat{L}_0}),$ and, under a mild assumption that $\hat{L}_0 = O(\hat{L})$ and $\hat{L}_0$ is not overwhelmingly (e.g., exponentially in $1/\epsilon, n$) smaller than $\hat{L}$, the total overhead due to estimating $\hat{L}$ is absorbed by the convergence bound from Theorem~\ref{thm:main-coder}. 
The variant of CODER that implements this doubling trick is summarized in Algorithm~\ref{alg:coder-PF}. 

\section{Proof of Lemma \ref{lem:error-vr}}\label{appx:proof-lemma-error-vr}
To prove Lemma~\ref{lem:error-vr}, we first provide an upper bound for the operator extrapolation error $a_s\langle\mF^j(\vx_{s,k}) - \vq_{s,k}^j,  \vx_{s,k}^j - \vu^j\rangle$ in Lemma~\ref{lemma:coder-gap-change-vr}, by using the definition of $\vq_{s,k}^j$. We then prove Lemma~\ref{lem:error-vr} by bounding the individual terms from this upper bound on the error.  

\begin{lemma}\label{lemma:coder-gap-change-vr}
For all $s\ge 1$, $k\in[K],j\in [m]\!$ and $\vu\in\dom(g)$, the sequence $\{\vx_{s,k}\}$ of Algorithm \ref{alg:coder-vr} satisfies 
\begin{equation*}
\begin{aligned}
 &\; a_s\langle\mF^j(\vx_{s,k}) - \vq_{s,k}^j,  \vx_{s,k}^j - \vu^j\rangle     \\
=&\; a_s\langle \mF^j(\vx_{s,k}) - \mF^j(\vy_{s, k, j}),  \vx_{s,k}^j - \vu^j\rangle       \\
&\; -a_{s,k-1}\langle\mF_t^j(\vx_{s, k-1}) - \mF^j_t(\vy_{s,k-1,j}),  \vx_{s,k}^j - \vx_{s,k-1}^j\rangle   \\
&\; + a_s\langle \mF^j(\vy_{s, k, j}) - \big(  \mF^j_t(\vy_{s,k,j}) - \mF_t^{j}(\hat{\vx}_{s-1}) + \boldsymbol{\mu}_{s}^j\big), \vx_{s,k}^j -  \vx_{s,k-1}^j\rangle   \\
&\;+a_s\langle  \mF^j(\vy_{s, k, j}) - \big(  \mF^j_t(\vy_{s,k,j}) - \mF_t^{j}(\hat{\vx}_{s-1}) + \boldsymbol{\mu}_{s}^j\big) ,   \vx_{s,k-1}^j - \vu^j\rangle \\
&\;- a_{s,k-1}\langle\mF_t^j(\vx_{s, k-1}) - \mF_t^j(\vy_{s, k-1, j}),  \vx_{s,k-1}^j - \vu^j\rangle  \\
&\;-\frac{ \beta a_s}{2}( \|\vx_{s,k}^j -  \hat{\vx}_{s-1}^j\|^2 - \|\vx^j_{s, k-1} - \vx_{s,k}^j \|^2\\
&\;\quad\quad\quad\quad + \| \vx^j_{s, k-1} - \vu^j\|^2 - \|\hat{\vx}_{s-1}^j -  \vu^j\|^2) \\
% &\;-a_{s,k-1}\langle\big(\mF_t^j(\vx_{s, k-1}) - \mF_t^j(\vy_{s,k,j})\big) - \big(\mF^j(\vx_{s, k-1}) - \mF^j(\vy_{s,k,j})\big),  \vx_{s,k-1}^j - \vu^j\rangle.
\end{aligned} 
\end{equation*}
\end{lemma}

\begin{proof}
% That is, $\vy_{s, k, j}$ is defined so that $\vp^j_{s,k} = \mF_t^j(\vy_{s, k, j}).$  \cb{Each $\vx_{s,k}^1$ here is a column vector, so the above definition may be not correct.}
% \cb{$\vy_{s,k,j}$ should be defined for the lemma, not only in the proof.} \cb{The definition of $\vy_{s,k,j}$ should be given outside of the lemma. }

%  $\mF^j_t(\vy_{s,k,j}) =   \mF_t^{j}(\vx^{1}_{s,k}, \ldots, \vx^{j-1}_{s,k}, \vx^{j}_{s, k-1},\ldots   \vx^{m}_{s, k-1}) $
 
% $\vq^j_{s,k} = \mF^j_t(\vy_{s,k,j}) - \mF_t^{j}(\hat{\vx}_{s-1}) + \boldsymbol{\mu}_{s} + \frac{a_{s-1}}{a_s}(\mF_t^j(\vx_{s, k-1}) - \mF^j_t(\vy_{s,k-1,j})) + \alpha(\vx^j_{s, k-1} - \hat{\vx}_{s-1}^j)$

By the definition of $\vq_{s,k}^j$, we have
\begin{align}
 &\; a_s (\mF^j(\vx_{s,k}) - \vq_{s,k}^j)  \notag\\
 =&\; a_s (\mF^j(\vx_{s,k}) - \mF^j(\vy_{s, k, j})) + a_s(\mF^j(\vy_{s, k, j}) - \vq_{s,k}^j) \notag\\
 =&\;  a_s (\mF^j(\vx_{s,k}) - \mF^j(\vy_{s, k, j})) + a_s\big(\mF^j(\vy_{s, k, j}) - \big(  \mF^j_t(\vy_{s,k,j}) - \mF_t^{j}(\hat{\vx}_{s-1}) + \boldsymbol{\mu}_{s}^j\big)\big)   \notag\\
 &\;- a_{s,k-1} (\mF_t^j(\vx_{s, k-1}) - \mF^j_t(\vy_{s,k-1,j})) - \beta a_s (\vx^j_{s, k-1} - \hat{\vx}_{s-1}^j). \label{eq:coder-gap-change-vr-1}
\end{align}
To prove the lemma, it remains to take the inner product between the right-hand side of Eq.~\eqref{eq:coder-gap-change-vr-1} and bound the corresponding terms. 
%Using Eq.~\eqref{eq:coder-gap-change-vr-1}, we can bound $a_s\langle \mF^j(\vx_{s,k}) - \vq_{s,k}^j,  \vx_{s,k}^j - \vu^j\rangle$ by bounding the last three terms w.r.t.~the   right hand side of Eq.~\eqref{eq:coder-gap-change-vr-1}. 
% \cb{Introduce the filtration notation. $\gF_{s,k, j-1}$, $\gF_{s,K} = \gF_{s+1,0}$  }

First,  we have 
\begin{align}
&\;a_s\langle \mF^j(\vy_{s, k, j}) - \big(  \mF^j_t(\vy_{s,k,j}) - \mF_t^{j}(\hat{\vx}_{s-1}) + \boldsymbol{\mu}_{s}^j\big), \vx_{s,k}^j - \vu^j\rangle        \notag\\
=&\;a_s\langle \mF^j(\vy_{s, k, j}) - \big(  \mF^j_t(\vy_{s,k,j}) - \mF_t^{j}(\hat{\vx}_{s-1}) + \boldsymbol{\mu}_{s}^j\big), \vx_{s,k}^j -  \vx_{s,k-1}^j\rangle   \notag\\
+&\;a_s\langle  \mF^j(\vy_{s, k, j}) - \big(  \mF^j_t(\vy_{s,k,j}) - \mF_t^{j}(\hat{\vx}_{s-1}) + \boldsymbol{\mu}_{s}^j\big) ,   \vx_{s,k-1}^j - \vu^j\rangle.     \label{eq:coder-gap-change-vr-2}
% =&\;a_s\langle \mF^j(\vy_{s, k, j}) - \big(  \mF^j_t(\vy_{s,k,j}) - \mF_t^{j}(\hat{\vx}_{s-1}) + \boldsymbol{\mu}_{s}^j\big), \vx_{s,k}^j -  \vx_{s,k-1}^j\rangle ,
\end{align}
% where the last equality is by the fact that $\mF^j_t(\vy_{s,k,j}) = \mF^j(\vy_{s, k, j})$, $\mF_t^{j}(\hat{\vx}_{s-1}) = \mF^{j}(\hat{\vx}_{s-1})=\boldsymbol{\mu}_{s}^j$. 
% and thus $\mF^j(\vy_{s, k, j}) - \big(  \mF^j_t(\vy_{s,k,j}) - \mF_t^{j}(\hat{\vx}_{s-1}) + \boldsymbol{\mu}_{s}\big)  = \vzero.$

Second, 
\begin{align}
&\;a_{s,k-1} \langle\mF_t^j(\vx_{s, k-1}) - \mF^j_t(\vy_{s,k-1,j}), \vx_{s,k}^j - \vu^j\rangle         \notag\\
=&\;a_{s,k-1} \langle\mF_t^j(\vx_{s, k-1}) - \mF^j_t(\vy_{s,k-1,j}), \vx_{s,k}^j - \vx_{s,k-1}^j\rangle  \notag\\
&\; + a_{s,k-1} \langle \mF_t^j(\vx_{s, k-1}) - \mF^j_t(\vy_{s,k-1,j}),  \vx_{s,k-1}^j - \vu^j\rangle. 
% =&\; a_{s,k-1} \langle\mF_t^j(\vx_{s, k-1}) - \mF^j_t(\vy_{s,k-1,j}), \vx_{s,k}^j - \vx_{s,k-1}^j\rangle  \notag\\
% &\; + a_{s,k-1}\langle\mF^j(\vx_{s, k-1}) - \mF^j(\vy_{s,k,j}),  \vx_{s,k-1}^j - \vu^j\rangle       \notag\\
% &\; + a_{s,k-1}\langle\big(\mF_t^j(\vx_{s, k-1}) - \mF_t^j(\vy_{s,k,j})\big) - \big(\mF^j(\vx_{s, k-1}) - \mF^j(\vy_{s,k,j})\big),  \vx_{s,k-1}^j - \vu^j\rangle. \notag
\label{eq:coder-gap-change-vr-3}
\end{align}

Third, we have the following identity,
\begin{align}
- \beta a_s \langle \vx^j_{s, k-1} - \hat{\vx}_{s-1}^j,   \vx_{s,k}^j - \vu^j\rangle    
 =&\;- a_s \beta \langle \vx^j_{s, k-1} - \hat{\vx}_{s-1}^j,  \vx_{s,k}^j -  \hat{\vx}_{s-1}^j\rangle\notag\\
 &\;- a_s \beta \langle \vx^j_{s, k-1} - \hat{\vx}_{s-1}^j,  \hat{\vx}_{s-1}^j -  \vu^j\rangle     \notag\\
% =&\;-\frac{a_s \beta}{2}\big(\|\vx^j_{s, k-1} - \hat{\vx}_{s-1}^j\|^2 + \|\vx_{s,k}^j -  \hat{\vx}_{s-1}^j\|^2 - \|\vx^j_{s, k-1} - \vx_{s,k}^j \|^2\big) \notag\\
% &\;-\frac{a_s \beta}{2}\big( \| \vx^j_{s, k-1} - \vu^j\|^2 - \|\vx^j_{s, k-1} - \hat{\vx}_{s-1}^j\|^2  - \|\hat{\vx}_{s-1}^j -  \vu^j\|^2 \notag\\
=&\;-\frac{\beta a_s }{2}( \|\vx_{s,k}^j -  \hat{\vx}_{s-1}^j\|^2 - \|\vx^j_{s, k-1} - \vx_{s,k}^j \|^2 \notag\\
 &\; \quad\quad\quad\quad+ \| \vx^j_{s, k-1} - \vu^j\|^2 - \|\hat{\vx}_{s-1}^j -  \vu^j\|^2). \label{eq:coder-gap-change-vr-4}
\end{align}

Combining Eqs. \eqref{eq:coder-gap-change-vr-1}-\eqref{eq:coder-gap-change-vr-4}, with simple rearrangements, completes the proof.  
% \begin{align}
% \end{align}
$ $
\end{proof}

\begin{proof}[Proof of Lemma~\ref{lem:error-vr}]
We prove the lemma by bounding the individual terms from the right-hand side in Lemma~\ref{lemma:coder-gap-change-vr}. We keep the first term from the right-hand side unchanged, and start by bounding the second line term. 
% Without loss of generality, we set $\vx_{s, -1} = \vx_{s, 0}.$ Here we provide upper bounds for the components of the upper bound in Lemma \ref{lemma:coder-gap-change-vr}  further by using Assumption \ref{assmpt:new-Lip} and the variance reduced construction of $\vq_{s,k}^j$.   
%
Using the definition of $\vy_{s, k-1, j}$, Cauchy-Schwarz inequality, and Young's inequality, for all $j$ and all $\alpha_1 > 0,$ we have 
\begin{align}
%&\; - a_{s,k-1} \innp{\mF_t^j(\vx_{s,k-1}) - \vp_{s,k-1}^j, \vx_{s,k}^j - \vx_{s,k-1}^j} \notag\\
%=\; 
& - a_{s,k-1}   \innp{\mF_t^j(\vx_{s,k-1}) - \mF_t^j(\vy_{s,k-1, j}), \vx_{s,k}^j - \vx_{s,k-1}^j}\notag\\
\le\; &  a_{s,k-1}  \|\mF_t^j(\vx_{s,k-1}) - \mF_t^j(\vy_{s,k-1, j})\| \| \vx_{s,k}^j-\vx_{s,k-1}^j\|\notag\\
\leq\;& \frac{{a_{s,k-1}}^2 \alpha_1}{2}   \|\mF_t^j(\vx_{s,k-1}) - \mF_t^j(\vy_{s,k-1, j})\|^2 + \frac{1}{2\alpha_1}\|\vx_{s,k}^j-\vx_{s,k-1}^j\|^2.   \label{eq:error-vr}
\end{align}
Meanwhile, in Eq.~\eqref{eq:error-vr}, by  the definition of $\widehat{\mQ}^j$ in Eq.~\eqref{eq:Q-hat} and Assumption \ref{assmpt:new-Lip}, %we have
\begin{align}
\|\mF_t^j(\vx_{s,k-1}) - \mF_t^j(\vy_{s,k-1, j})\|^2   \le\;&  (\vx_{s,k-1} - \vy_{s,k-1, j})^T \mQ^j (\vx_{s,k-1} - \vy_{s,k-1, j})        \notag \\
=\;&  (\vx_{s,k-1} - \vx_{s,k-2})^T \widehat{\mQ}^j (\vx_{s,k-1} - \vx_{s,k-2}).  \label{eq:error-vr1}
\end{align}

%Hence,
%\begin{equation}\label{eq:error-vr1-sum}
%    \sum_{j=1}^m \|\mF_t^j(\vx_{s,k-1}) - \mF_t^j(\vy_{s,k-1, j})\|^2 \leq \hat{L}^2 \|\vx_{s,k-1} - \vx_{s,k-2}\|^2.
%\end{equation}

For the third line term, for any $\alpha_2>0$, applying Cauchy-Schwarz and Young's inequalities again, 
\begin{align}
& a_s\langle \mF^j(\vy_{s, k, j}) - \big(  \mF^j_t(\vy_{s,k,j}) - \mF_t^{j}(\hat{\vx}_{s-1}) + \boldsymbol{\mu}_{s}^j\big), \vx_{s,k}^j -  \vx_{s,k-1}^j\rangle   \notag\\
\le&\;  \frac{{a_s}^2\alpha_2}{2}\|\mF^j(\vy_{s, k, j}) - \big(  \mF^j_t(\vy_{s,k,j}) - \mF_t^{j}(\hat{\vx}_{s-1}) + \boldsymbol{\mu}_{s}^j\big)\|^2 \!\!+\!\! \frac{1}{2\alpha_2}\|\vx_{s,k}^j -  \vx_{s,k-1}^j\|^2. \label{eq:error-vr-2} 
\end{align}

For all $s\ge 1, k\in[K], j \in[m]$, let $\gF_{s, k, j}$ be the natural filtration with $\gF_{s, k, m} = \gF_{s, k+1, 0},\gF_{s, K, m} = \gF_{s+1, 1, 0}$, containing all randomness up to and including iteration $j$ of cycle $k$ within epoch $s.$ Then in Eq.~\eqref{eq:error-vr-2}, the first variance term can be bounded using standard variance reduction arguments: %\jd{What is the difference between the first two lines?}
\begin{align}
&\;\E\Big[   \|\mF^j(\vy_{s, k, j}) - \big(  \mF^j_t(\vy_{s,k,j}) - \mF_t^{j}(\hat{\vx}_{s-1}) + \boldsymbol{\mu}_{s}^j\big)\|^2 \Big| \gF_{s,k, j-1}\Big]       \notag\\  
=&\;\E\Big[   \|\mF^j(\vy_{s, k, j}) - \mF^{j}(\hat{\vx}_{s-1}) - \big(  \mF^j_t(\vy_{s, k, j}) - \mF_t^{j}(\hat{\vx}_{s-1})\big)\|^2 \Big| \gF_{s,k, j-1}\Big]       \notag\\  
\le &\; \E\Big[  \|\mF_t^j(\vy_{s,k,j}) - \mF_t^{j}(\hat{\vx}_{s-1})\|^2 \Big| \gF_{s,k, j-1}\Big]   \notag\\ 
=\; & \frac{1}{n}\sum_{t'=1}^n \|\mF_{t'}^j(\vy_{s,k,j}) - \mF_{t'}^j(\hat{\vx}_{s-1})\|^2\notag \\
\le &\; \frac{2}{n}\sum_{t'=1}^n \Big(\|\mF_{t'}^j(\vy_{s,k,j}) -  \mF_{t'}^{j}({\vx}_{s,k})\|^2 + \|\mF_{t'}^j({\vx}_{s,k}) - \mF_{t'}^j(\hat{\vx}_{s-1})\|^2\Big), %\E\Big[ 2\Big( \|\mF_t^j(\vy_{s,k,j}) -  \mF_t^{j}({\vx}_{s,k})\|^2 + \|\mF_t^j({\vx}_{s,k}) - \mF_t^j(\hat{\vx}_{s-1})\|^2\Big)  \Big| \gF_{s,k, j-1}\Big],         
\label{eq:error-vr-3}   
% \le &\;  \E\Big[2 \Big((\vy_{s,k,j} - {\vx}_{s,k})^T\mQ^j (\vy_{s,k,j} - {\vx}_{s,k})+ \|\mF_t^j({\vx}_{s,k}) - \mF_t^j(\hat{\vx}_{s-1})\|^2\Big) \Big| \gF_{s,k, j-1} \Big]     \notag\\ 
% =& \; \E\Big[2 \Big((\vx_{s,k-1} - {\vx}_{s,k})^T\widehat{\mQ}^j (\vx_{s,k-1} - {\vx}_{s,k})+ \|\mF_t^j({\vx}_{s,k}) - \mF_t^j(\hat{\vx}_{s-1})\|^2\Big) \Big| \gF_{s,k, j-1} \Big], 
\end{align}
where the first inequality is by  $\E\big[\big(\mF_t^j(\vy_{s,k,j}) - \mF_t^{j}(\hat{\vx}_{s-1})\big)\big|\gF_{s,k, j-1}\big] = \mF^j(\vy_{s,k,j}) - \mF^{j}(\hat{\vx}_{s-1}),$ the second inequality is by  $(a+b)^2 \le 2(a^2 + b^2)$, $\forall a, b$.

Then similar to Eq.~\eqref{eq:error-vr1}, for Eq.~\eqref{eq:error-vr-3},  we have for all $t' \in \{1, \dots, n\},$
\begin{align}
\|\mF_{t'}^j(\vy_{s,k,j}) -  \mF_{t'}^{j}({\vx}_{s,k})\|^2      
 \le\;& (\vy_{s,k,j} - {\vx}_{s,k})^T\mQ^j (\vy_{s,k,j} - {\vx}_{s,k}) \notag\\  
  =\;& (\vx_{s,k-1} - {\vx}_{s,k})^T\widehat{\mQ}^j (\vx_{s,k-1} - {\vx}_{s,k}). \label{eq:error-vr3}
\end{align}

% the third inequality is by the smoothness assumption, and the last equality is by the definition of $\vy_{s,k,j}$ and $\widehat{\mQ}^j.$

% \cb{Write down a separate lemma for the variance reduction inequality? }

To bound the sums of Eqs.~\eqref{eq:error-vr1} and \eqref{eq:error-vr-3} over $j$ from $1$ to $m$, we use the Lipschitz constants  $L$ (defined in Assumption \ref{ass:Lip}) and  $\hat{L}$ (defined in Assumption \ref{assmpt:new-Lip}):
\begin{equation}
 \begin{aligned}
\sum_{j=1}^m  (\vx_{s,k-1} - \vx_{s,k-2})^T \widehat{\mQ}^j (\vx_{s,k-1} - \vx_{s,k-2})  \le&\; \hat{L}^2\|\vx_{s,k-1} - \vx_{s,k-2}\|^2, \\ 
\sum_{j=1}^m (\vx_{s,k-1} - {\vx}_{s,k})^T\widehat{\mQ}^j (\vx_{s,k-1} - {\vx}_{s,k}) \le&\; \hat{L}^2\|\vx_{s,k} - \vx_{s,k-1}\|^2, \\ 
\sum_{j=1}^m \frac{1}{n}\sum_{t' = 1}^n \|\mF_{t'}^j({\vx}_{s,k}) - \mF_{t'}^j(\hat{\vx}_{s-1})\|^2 =&\; \frac{1}{n} \sum_{t' = 1}^n\|\mF_{t'}({\vx}_{s,k}) - \mF_{t'}(\hat{\vx}_{s-1})\|^2 \\
\le&\; L^2 \| {\vx}_{s,k} - \hat{\vx}_{s-1}\|^2.  
\end{aligned}  \label{eq:error-vr-4} 
\end{equation}

To bound the terms from the fourth and fifth line, observe that for any fixed $\vu^j$, as $\vx_{s,k-1}^j\in\gF_{s,k, j-1}$, $\E[\mF_t^j(\vx_{s, k-1})|\gF_{s,k, j-1}] = \mF^j(\vx_{s, k-1})$ and $\E[\mF_t^{j}(\hat{\vx}_{s-1})|\gF_{s,k, j-1}] = \boldsymbol{\mu}_{s}^j$, % we have
\begin{align}
\E[\langle\mF_t^j(\vx_{s, k-1}) - \mF_t^j(\vy_{s, k-1, j}),&  \vx_{s,k-1}^j - \vu^j\rangle |\gF_{s,k, j-1}]  \nonumber\\
=\;&\langle\mF^j(\vx_{s, k-1}) - \mF^j(\vy_{s, k-1, j}),  \vx_{s,k-1}^j - \vu^j\rangle,    \label{eq:error-vr-5}   \\
\E[\langle  \mF^j(\vy_{s, k, j}) - \big(  \mF^j_t(\vy_{s,k,j}) \;&- \mF_t^{j}(\hat{\vx}_{s-1}) + \boldsymbol{\mu}_{s}^j\big) ,   \vx_{s,k-1}^j - \vu^j\rangle |\gF_{s,k, j-1}] = 0. \label{eq:error-vr-6}  
\end{align}

Hence, combining Lemma~\ref{lemma:coder-gap-change-vr} and Eqs.~\eqref{eq:error-vr}--\eqref{eq:error-vr-6}, we have
\begin{equation}
    \begin{aligned}
    &\E[E_{s,k}(\vu)|\gF_{s,k, j-1}]     \\
     \le&\; \sum_{j=1}^m\Big( a_s\E[\langle \mF^j(\vx_{s,k}) - \mF^j(\vy_{s, k, j}),  \vx_{s,k}^j - \vu^j\rangle|\gF_{s,k, j-1}]     \\ 
&\;\quad\quad- a_{s,k-1}\langle\mF^j(\vx_{s, k-1}) - \mF^j(\vy_{s, k-1, j}),  \vx_{s,k-1}^j - \vu^j\rangle\Big)     \\ 
&\; + \Big(a_s^2 L^2\alpha_2 - \frac{a_s \beta}{2}\Big)  \E[\| {\vx}_{s,k} - \hat{\vx}_{s-1}\|^2 |\gF_{s,k, j-1}]
+ \frac{a_{s,k-1}^2  \hat{L}^{2} \alpha_1}{2} \|\vx_{s,k-1} - \vx_{s,k-2}\|^2  \\
&  + \Big( a_s^2\hat{L}^2\alpha_2   + \frac{1}{2\alpha_1} + \frac{1}{2\alpha_2} + \frac{a_s \beta}{2}  -  \frac{K(1+A_{s-1}\gamma)}{2}\Big)\E[ \| {\vx}_{s,k} - \vx_{s,k-1}\|^2 |\gF_{s,k, j-1}]   \\
&\; - \frac{a_s \beta}{2}\big( \| \vx_{s, k-1} - \vu\|^2 - \|\hat{\vx}_{s-1} -  \vu\|^2\big)- \frac{a_s \gamma}{2}\E[\|\vx_{s,k} - \vu\|^2|\gF_{s,k, j-1}]. 
    \end{aligned}\label{eq:error-vr-5-1} 
\end{equation}

To complete the proof, it remains to choose the points $\vxh_s,$ the step sizes $a_s, a_{s, k-1}$, and parameters $\alpha_1, \alpha_2,$ and $\beta.$ First, by our choice of step sizes, we have $a_{s, 0} = a_{s-1}$, $a_{s,1} = \cdots = a_{s,K} = a _s$, which makes the terms from the first two lines of the right-hand side of Eq.~\eqref{eq:error-vr-5-1} telescope. 

 %First, we 
 Next, we define $\vxh_{s, k}$ by  $\vxh_{s,k} = \frac{\beta }{\beta+\gamma}\vx_{s, k-1} + \frac{\gamma}{\beta+\gamma}\vx_{s,k}$ so that $\vxh_s = \frac{1}{K}\sum_{k=1}^K \vxh_{s, k}.$ As a consequence, by Young's inequality,
\begin{align}
\beta \| \vx_{s, k-1} - \vu\|^2 + \gamma  \|\vx_{s,k} - \vu\|^2  \ge (\beta+\gamma)\|\vxh_{s,k}-\vu\|^2.
\end{align}
This will make the terms in the last line of Eq.~\eqref{eq:error-vr-5-1} telescope, after summing over $k \in \{1, \dots, K\}$. 

To make the remaining terms in Eq.~\eqref{eq:error-vr-5-1} either telescope or cancel out, we make the following step size and parameter choices: 
%Then by using the following settings: 
$\forall s \ge 1,$
\begin{align}
&\; \alpha_1 = \frac{8}{K(1+A_{s-1}\gamma)}, \quad \alpha_2 = \frac{8}{K(1+A_{s-1}\gamma)}, \quad \beta =  \frac{2L}{\sqrt{K}},    \nonumber\\
&\; a_s \le (1+A_{s-1}\gamma)\min\Big\{ \frac{\sqrt{K}}{8L},  \frac{K}{8\hat{L}}\Big\},\nonumber\\
&\; a_{s+1} \le \big(1 + \frac{\gamma}{\beta}\big)a_s. \nonumber
\end{align}
Under this choice, using that $A_s$ is non-decreasing with $s$ and $K \geq 1,$ it is not hard to verify that 
\begin{equation}
\begin{aligned}
{a_s}^2\alpha_2 L^2 - \frac{a_s \beta}{2} \le&\; a_s\Big( \frac{L}{\sqrt{K}}  - \frac{L}{\sqrt{K}}\Big) = 0, \\ 
\frac{{a_{s,k-1}}^2  \hat{L}^{2} \alpha_1}{2}\le&\; \frac{K(1+A_{s-1}\gamma)}{16 },   \\
{a_s}^2\hat{L}^2 \alpha_2   + \frac{1}{2\alpha_1} + \frac{1}{\alpha_2} + \frac{a_s \beta}{2}  -  \frac{K(1+A_{s-1}\gamma)}{2} \le&\; - \frac{K(1+A_{s-1}\gamma)}{8},    \\
a_{s+1}\beta  \le &\;  a_s(\beta + \gamma).      
\end{aligned}  \label{eq:error-vr-61}  
\end{equation}

By denoting $A_{s, 0} = A_{s-2}, A_{s,1}=A_{s,2}=\cdots =A_{s,K} = A_{s-1}$ and combining Eqs.~\eqref{eq:error-vr-5}--\eqref{eq:error-vr-61}, we have: $\forall k \ge 1,$
\begin{equation}
\begin{aligned}
\E[E_{s,k}(\vu)|\gF_{s,k, j-1}]     
 \le&\; \sum_{j=1}^m\Big( a_s\E[\langle \mF^j(\vx_{s,k}) - \mF^j(\vy_{s, k, j}),  \vx_{s,k}^j - \vu^j\rangle|\gF_{s,k, j-1}]     \\ 
&\;\quad\quad- a_{s,k-1}\langle\mF^j(\vx_{s, k-1}) - \mF^j(\vy_{s, k-1, j}),  \vx_{s,k-1}^j - \vu^j\rangle\Big)     \\ 
&\;+  \frac{K(1+A_{s,k-1}\gamma)}{8}    \|\vx_{s,k-1} - \vx_{s,k-2}\|^2  \\ 
&\;-  \frac{K(1+A_{s-1}\gamma)}{8} \E[ \| {\vx}_{s,k} - \vx_{s,k-1}\|^2 |\gF_{s,k, j-1}]   \\
&\;+\beta a_s\|\hat{\vx}_{s-1}-\vu\|^2 - \beta a_{s+1}\E[\|\vxh_{s,k} - \vu\|^2|\gF_{s,k, j-1}]. 
\end{aligned}\label{eq:error-vr-7} 
\vspace{-4mm}
\end{equation}
% Let $\vx_{s,K} = \vx_{s+1,0},\vx_{s,K-1} = \vx_{s+1,-1}$. 
% \cb{How to say this fact? In algorithm? }  
Taking expectation w.r.t.~all the randomness in the algorithm, using the tower property of expectation $\E[\E[\cdot|\gF]] = \E[\cdot]$, and summing  Eq.~\eqref{eq:error-vr-7}  over $k\in \{1,\dots, K\}$,  we have \vspace{-4mm}
\begin{align}
&\;\sum_{k=1}^K \E\Big[E_{s,k}(\vu)\Big]% \Big| \gF_{s, k-1 }\Big]
\nonumber\\
\le&\;  \sum_{j=1}^m\Big( a_s\E\big[\langle \mF^j(\vx_{s,K}) - \mF^j(\vy_{s, K, j}),  \vx_{s,K}^j - \vu^j \rangle %| \gF_{s,k, j-1} ]  
\nonumber   \\ 
&\;\quad\quad- a_{s,0}\langle\mF^j(\vx_{s, 0}) - \mF^j(\vy_{s, 0, j}),  \vx_{s,0}^j - \vu^j\rangle\big]\Big)  \nonumber\\ 
&\;  + \frac{K(1+A_{s,0}\gamma)}{8}   \E[ \|\vx_{s,0} - \vx_{s,-1}\|^2 ]
-  \frac{K(1+A_{s-1}\gamma)}{8} \E[ \| {\vx}_{s,K} - \vx_{s,K-1}\|^2 ]%|\gF_{s,k, j-1}] 
\nonumber\\ 
&\; - \sum_{k=1}^K \frac{\beta\alpha_{s+1}}{2}\E[\|\vxh_{s,k} - \vu\|^2 ]%| \gF_{s, 0}]  
+ \frac{ \beta K \alpha_s}{2}\E[\|\hat{\vx}_{s-1} -\vu\|^2] \nonumber \\
\le &\;   \sum_{j=1}^m\Big( a_s\E\big[\langle \mF^j(\vx_{s+1,0}) - \mF^j(\vy_{s+1, 0, j}),  \vx_{s+1,0}^j - \vu^j\rangle %| \gF_{s,0}  ]  
\nonumber   \\ 
&\;\quad\quad- a_{s-1}\langle\mF^j(\vx_{s, 0}) - \mF^j(\vy_{s, 0, j}),  \vx_{s,0}^j - \vu^j\rangle\big]\Big)  \nonumber\\  
&\;  + \frac{K(1+A_{s-2}\gamma)}{8}  \E[  \|\vx_{s,0} - \vx_{s,-1}\|]^2 
-  \frac{K(1+A_{s-1}\gamma)}{8} \E[ \| {\vx}_{s+1,0} - \vx_{s+1,-1}\|^2] %|\gF_{s,0}]       
\nonumber\\ 
&\;  + \frac{\beta K a_{s}}{2}\E[\|\hat{\vx}_{s-1} - \vu\|^2] - \frac{\beta K a_{s+1}}{2}\E[\|\hat{\vx}_s - \vu\|^2],\label{eq:error-vr-8} 
\end{align}
where the first inequality is by our setting $a_{s, 0} = a_{s-1}$, $a_{s,1} = \cdots = a_{s,K} = a _s$
and $A_{s,0}=A_{s-2}, A_{s,1} = \cdots = A_{s,K} = A_{s-1},$ the second inequality is by our definitions $\vx_{s,K}  = \vx_{s+1,0}, \vx_{s,K-1}  = \vx_{s+1,-1},  \vy_{s, K, j} = \vy_{s+1, 0, j}$ and  $ \hat{\vx}_s=\frac{1}{K}\sum_{k=1}^K\vxh_{s,k} = \frac{1}{K}\sum_{k=1}^K\big( \frac{\beta }{\beta+\gamma}\vx_{s, k-1} + \frac{\gamma}{\beta+\gamma}\vx_{s,k}\big)$, and the convexity of $\|\cdot\|^2.$  

Finally, summing Eq.~\eqref{eq:error-vr-8} over $s \in \{ 1,\dots, S\},$ we have
\begin{align}
 &\;\sum_{s=1}^S\sum_{k=1}^K \E\Big[E_{s,k}(\vu) \Big]\nonumber\\   
 \le\;&\sum_{j=1}^m\Big( a_S\E[\langle \mF^j(\vx_{S+1,0}) - \mF^j(\vy_{S+1, 0, j}),  \vx_{S+1,0}^j - \vu^j\rangle]  \nonumber   \\ 
&\;\quad\quad- a_{0}\langle\mF^j(\vx_{1, 0}) - \mF^j(\vy_{1, 0, j}),  \vx_{1,0}^j - \vu^j\rangle\Big)  \nonumber\\  
&\;  + \frac{K(1+A_{-1}\gamma)}{8}    \|\vx_{1,0} - \vx_{1,-1}\|^2 
-  \frac{K(1+A_{S-1}\gamma)}{8} \E[ \| {\vx}_{S+1,0} - \vx_{S+1,-1}\|^2 ]       \nonumber\\ 
&\;\quad\quad - \frac{\beta K a_{S+1} }{2}\|\hat{\vx}_S- \vu\|^2 + \frac{\beta K a_{1}}{2}\|\hat{\vx}_{0} - \vu\|^2\nonumber\\  
\le\;&\sum_{j=1}^m  a_S\E[\langle \mF^j(\vx_{S+1,0}) - \mF^j(\vy_{S+1, 0, j}),  \vx_{S+1,0}^j - \vu^j\rangle]  \nonumber   \\ 
&\; -  \frac{K(1+A_{S-1}\gamma)}{8} \E[ \| {\vx}_{S+1,0} - \vx_{S+1,-1}\|^2 ]  \nonumber   \\
&\;\quad\quad - \frac{\beta K a_{S+1} }{2}\E[\|\hat{\vx}_S- \vu\|^2] + \frac{\beta K a_{1}}{2}\|\vx_{0} - \vu\|^2,
\end{align}
where the last inequality is by $a_0 = 0$ and $\vx_{1,0} = \vx_{1,-1} = \vx_0 = \vxh_0.$ 
\end{proof}

%\subsection{Proof of Theorem \ref{thm:main-coder-vr}}

\end{document}